\definecolor{red}{rgb}{1,0,0}
\definecolor{blue}{rgb}{0,0,1}
\definecolor{green}{rgb}{0,.6,0}
\numberwithin{figure}{section}   
\newtheorem{thm}{Theorem}[section]
\newtheorem{cor}[thm]{Corollary}
\newtheorem{lem}[thm]{Lemma}
\newtheorem{prop}[thm]{Proposition}
\newtheorem{obs}[thm]{Observation}
\newtheorem{quest}[thm]{Question}
\theoremstyle{definition}
\newtheorem{rem}[thm]{Remark}
\theoremstyle{definition}
\theoremstyle{definition}
\newtheorem{ex}[thm]{Example}
\def\incpp#1{G(\mathcal{P}_{#1})}
\newcommand{\rad}{\operatorname{rad}} 
\newcommand{\Z}{\operatorname{Z}}
\newcommand{\Zp}{\operatorname{Z}_+}
\newcommand{\thr}{\operatorname{th}}
\newcommand{\thp}{\operatorname{th}_+}
\newcommand{\thc}{\operatorname{th}_c}
\newcommand{\pt}{\operatorname{pt}}
\newcommand{\ptp}{\operatorname{pt}_+}
\newcommand{\capt}{\operatorname{capt}} 
\newcommand{\dist}{\operatorname{d}}  
\newcommand{\len}{\operatorname{len}}  
\newcommand{\bit}{\begin{itemize}}
\newcommand{\eit}{\end{itemize}}
\newcommand{\ben}{\begin{enumerate}}
\newcommand{\een}{\end{enumerate}}
\newcommand{\beq}{\begin{equation}}
\newcommand{\eeq}{\end{equation}}
\newcommand{\bea}{\begin{eqnarray*}} 
\newcommand{\eea}{\end{eqnarray*}}
\newcommand{\bpf}{\begin{proof}}
\newcommand{\epf}{\end{proof}\ms}
\newcommand{\bmt}{\begin{bmatrix}}
\newcommand{\emt}{\end{bmatrix}}
\newcommand{\ms}{\medskip}
\newcommand{\lc}{\left\lceil}
\newcommand{\rc}{\right\rceil}
\newcommand{\lf}{\left\lfloor}
\newcommand{\rf}{\right\rfloor}
\newcommand{\du}{\,\dot{\cup}\,}
\newcommand{\noi}{\noindent}
\title{Throttling for the game of Cops and Robbers  on graphs}
\author{Jane Breen\thanks{Department of Mathematics, University of Manitoba, Winnipeg, MB R3T 2N2, Canada, breenj3@myumanitoba.ca.}\and Boris Brimkov\thanks{Department of Computational and Applied Mathematics, Rice University, Houston, TX 77005, USA,  boris.brimkov@rice.edu.}\and Joshua Carlson\thanks{Department of Mathematics, Iowa State University, Ames, IA 50011, USA, (jmsdg7, hogben, reinh196)@iastate.edu.}\and Leslie Hogben\footnotemark[3]\ \thanks{American Institute of Mathematics, 600 E. Brokaw Road, San Jose, CA 95112, USA, hogben@aimath.org} 
\and K.E. Perry\thanks{Department of Mathematics, University of Denver, Denver, CO 80208, USA,  Katherine.E.Perry@du.edu.}\and Carolyn Reinhart\footnotemark[3]}
\begin{document}
\maketitle

\vspace{-10pt}

\begin{abstract} 
We consider the cop-throttling number of a graph $G$ for the game of Cops and Robbers, which is defined to be the minimum of $(k + \capt_k(G))$, where $k$ is the number of cops and $\capt_k(G)$ is the minimum number of rounds needed for $k$ cops to capture the robber on $G$ over all possible games. We provide  some 
tools for bounding the cop-throttling number, including showing that the positive semidefinite (PSD) throttling number, a variant of zero forcing throttling, is an upper bound for the cop-throttling number. We also characterize graphs having low cop-throttling number and investigate how large the cop-throttling number can be for a given graph. We consider trees, unicyclic graphs, incidence graphs of finite projective planes (a Meyniel extremal family of graphs), a family of cop-win graphs with maximum capture time, grids, and hypercubes. All the upper bounds on the cop-throttling number we obtain for families of graphs are  $ O(\sqrt n)$.  \end{abstract}

\noi {\bf Keywords} Cops and Robbers, propagation time, throttling, zero forcing, positive semidefinite

\noi{\bf AMS subject classification} 05C57, 05C15, 05C50


\section{Introduction}\label{sintro} 

Cops and Robbers, first introduced in \cite{AF84, NW83, Q78}, is a two-player pursuit-evasion game played on a simple\footnote{In the literature, a reflexive graph, i.e., a graph in which each vertex has a loop, is often used but this is equivalent to using a simple graph in which staying in place is considered a move.} graph.  The game has two players: one who places and moves a collection of cops and the other who places and moves a single robber. The goal for the cops is to catch the robber by occupying the same vertex the robber occupies; this is called a {\em capture}.  The goal of the robber is to avoid capture.  After an initial placement of the cops on a multiset of vertices (meaning more than one cop can occupy a single vertex), followed by the placement of the robber, the game is played over a sequence of time-steps or rounds during which the players take turns, both playing in a single round, starting with the cops, as follows: The team of cops takes a turn by allowing each cop one chance to move to an adjacent vertex or decide to stay in place. Similarly, the robber takes a turn by moving to an adjacent vertex or deciding to stay in place. The cops win the game if after some finite number of rounds, a cop captures the robber.  If the robber has a strategy to evade the cops indefinitely, the robber wins.

Much of the study of the game of Cops and Robbers has focused on the {\em cop number} $c(G)$ of a graph $G$, first introduced in \cite{AF84}, which is the minimum number of cops required to capture the robber, regardless of the robber's strategy. A graph is called {\em cop-win} if $c(G) = 1$. Establishing an upper bound for this parameter is the focus of {\em Meyniel's Conjecture} \cite{frankl1987cops}, which states that if $G$ is a connected graph of order $n$, then $c(G) = O(\sqrt{n})$.

The {\em length} of a game is the number of rounds needed for the cops to catch the robber and  a game is played {\em optimally} if its length is the minimum over all possible strategies for the cops. Note that length does not include the initial placement of the cops and robber and assumes that the robber is playing to avoid capture for as long as possible. Given a set of $k$ cops, define the {\em k-capture time}, $\capt_k(G)$, to be the smallest number of rounds that $k$ cops need to win on $G$ over all possible games. We say that $\capt_k(G)$ equals infinity if the robber can evade capture with $k$ cops. This invariant was introduced in \cite{BGHK09} with $k = c(G)$, (in this case we simply write $\capt(G)$), and most recently studied in \cite{BPPR17} with $c(G) \leq k \leq \gamma(G)$, where $\gamma(G)$ denotes the domination number of $G$.

In this paper, we consider the situation in which interest is placed on optimizing the sum of the resources used to accomplish a task and the time needed to complete that task. To that end, we define the {\em cop-throttling number}, $\thc(G)$, of a graph $G$ to be 
\[ \thc(G) =\min_{k \in [n]} \left(k+\capt_k(G)\right)\vspace{-4pt} \]
where $n$ is the order of $G$ and $[n]=\{1,\dots,n\}$. Observe that if $k \geq n$,  $\capt_k(G) = 0$, and if $\gamma(G) \leq k < n$, $\capt_k(G) = 1$. Further, if $k < c(G)$, $\capt_k(G)$ is infinity. Thus, it makes sense to restrict our consideration of this invariant to the range $c(G) \leq k \leq \gamma(G)$. 

We remark here that the idea of throttling first appeared and was studied by Butler and Young in \cite{BY13throttling} for zero forcing {on} a graph. Zero forcing is a coloring game played on a graph in which vertices are initially colored blue or white, white vertices can be colored blue using a color change rule, and the goal is to color all of the vertices blue (see Section \ref{sPSDZF} for precise definitions concerning zero forcing). The {\em standard zero forcing  number}, $\Z(G)$,  is the minimum number of initially blue vertices needed to color the entire graph blue using the standard color change rule. The standard propagation time of $G$ is the minimum number of time-steps needed to color  all the vertices of $G$ blue starting with  $\Z(G)$ blue vertices, when all independent color changes are performed simultaneously at each time-step.  The definitions of standard zero forcing number and standard propagation time are analogous to cop number and capture time, respectively. Butler and Young define the throttling number as the  minimum of the sum of $|S|$ and the propagation time of $S$ over sets $S$ of vertices. It was this study, and a subsequent study of throttling for positive semidefinite zero forcing \cite{PSDthrottle}, that led to our interest in throttling for the game of Cops and Robbers.

In the present work, we investigate throttling for the game of Cops and Robbers on graphs. We begin in Section \ref{sbasic} with some basic observations and examples, as well as a  useful clique sum result. In Section \ref{sPSDZF} we show that the cop-throttling number of a tree (respectively, unicyclic graph) of order $n$ is at most $2\sqrt n$ (respectively, $\sqrt 6 \sqrt n$). These bounds are established by showing that the positive semidefinite (PSD) throttling number  is an upper bound for the cop-throttling number of a graph; in fact, we show that the two are equal for trees.  We also discuss connections between cop-throttling and other graph parameters in this section. 
In Section \ref{sextremeL} we characterize graphs having low cop-throttling number and in Section \ref{high}, we investigate how large $\thc(G)$ can be for a given graph $G$. We consider incidence graphs of finite projective planes (a Meyniel extremal family of graphs), a family of cop-win graphs with maximum capture time, grids, and hypercubes. Interestingly, all the upper bounds on the cop-throttling number we obtain for families of connected graphs are  $ O(\sqrt n)$, where $n$ is the order of $G$.  If the result $\thc(G)= O(\sqrt n)$ were established in general, it would imply Meyniel's Conjecture, since $c(G)\le \thc(G)$.

Lastly, we remark here that cop-win graphs, which arise naturally throughout the study of Cops and Robbers, have been characterized \cite{NW83, Q78}:  A {\em corner} in a graph $G$ is a vertex $u$ such that there is another vertex $v$ with $N[u]\subseteq N[v]$. Every cop-win graph has a corner, and deletion of corners is used to reduce a cop-win graph to a single vertex: A graph is {\em dismantlable} if there is an ordering of the vertices $v_1,\dots,v_{n}$ such that $v_k$ is a corner in $G-\{v_1,\dots,v_{k-1}\}$ for $k=1,\dots,n-1$.
\begin{thm}\label{thm:copwin} {\rm \cite{NW83, Q78}, \cite[Theorem 2.3]{CRbook}} A graph $G$ is cop-win if and only if $G$ is dismantlable.
\end{thm}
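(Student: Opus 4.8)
The plan is to prove both implications by induction on $n=|V(G)|$, the base case $n=1$ being immediate; this is essentially the classical Nowakowski--Winkler / Quilliot argument, organized around the interplay between corners and retractions.

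For the ``if'' direction, suppose $G$ is dismantlable with vertex ordering $v_1,\dots,v_n$, so $v_1$ is a corner, say $N[v_1]\subseteq N[v_j]$ with $j>1$; note $v_1\sim v_j$ since $v_1\in N[v_1]\subseteq N[v_j]$. The graph $G'=G-v_1$ is dismantlable via $v_2,\dots,v_n$, so by the inductive hypothesis it is cop-win; fix a winning cop strategy on $G'$. I would then define the retraction $r:V(G)\to V(G')$ by $r(v_1)=v_j$ and $r(x)=x$ for $x\neq v_1$, and observe that the containment $N[v_1]\subseteq N[v_j]$ makes $r$ a homomorphism (in the reflexive sense of allowing a player to stay in place). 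The cop now plays on $G$ by \emph{shadowing}: while the robber occupies a vertex $u$, the cop pretends the robber is at the shadow $r(u)\in V(G')$ and responds with its winning $G'$-strategy. Because each robber move in $G$ projects under $r$ to a legal move or stay in $G'$, the cop eventually occupies the current shadow vertex. If the robber is then not at $v_1$, it equals its own shadow and is captured; if it is at $v_1$, the cop sits at $v_j$, and since every vertex reachable by the robber from $v_1$ lies in $N[v_1]\subseteq N[v_j]$, the cop captures on the following move. Hence $G$ is cop-win.

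For the ``only if'' direction, the crux is the lemma that \emph{every cop-win graph has a corner}. I would prove this by contradiction: assuming $G$ is cop-win but corner-free, I claim the robber wins. The robber maintains, after each completed round, the invariant that the cop's vertex $c$ satisfies $c\notin N[u]$, where $u$ is the robber's vertex, so that no capture is possible on the cop's next move. This is achievable initially because a corner-free graph has no dominating vertex, so the robber can answer the cop's placement $c_0$ by some $u_0\notin N[c_0]$. For the inductive step, after the cop moves from $c$ to some $c'\in N[c]$, the robber at $u$ is not yet caught since $u\notin N[c]$ forces $c'\neq u$; moreover $N[u]\not\subseteq N[c']$, for otherwise $u$ (which is distinct from $c'$) would be a corner, contradicting corner-freeness. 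So the robber steps to some $u'\in N[u]\setminus N[c']$ and the invariant is restored. This yields perpetual evasion, contradicting that $G$ is cop-win; hence $G$ has a corner $v_1$, with $N[v_1]\subseteq N[v_j]$ for some $v_j\neq v_1$. As in the first direction, $G-v_1$ is a retract of $G$, and a retract of a cop-win graph is cop-win (simulate the game on the larger graph and project the winning cop positions onto the retract); so $G-v_1$ is cop-win, hence dismantlable by induction, and prepending $v_1$ gives a dismantling order for $G$.

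I expect the main obstacle to be precisely the corner-existence lemma: one must convert the mere existence of a winning cop strategy into the purely structural assertion that some closed neighborhood is contained in another, and the cleanest route I know is the indirect robber-evasion argument above, which exploits the hypothetical corner-free structure to keep the robber perpetually one step out of reach. By comparison, the shadowing/retraction machinery in the other direction is routine once the homomorphism $r$ is set up correctly; the only delicate point there is the bookkeeping of turn order at the moment the cop lands on the shadow of a robber sitting at the deleted corner $v_1$.
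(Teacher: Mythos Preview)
The paper does not prove this theorem; it is stated in the introduction with citations to \cite{NW83,Q78} and \cite[Theorem~2.3]{CRbook} and then used as background. So there is no in-paper proof to compare against.

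That said, your proposal is a correct and complete rendition of the classical Nowakowski--Winkler/Quilliot argument. The shadowing-via-retraction step for the ``if'' direction is carried out correctly, including the endgame when the robber sits on the deleted corner. For the ``only if'' direction your corner-existence lemma is the standard one, and your invariant $c\notin N[u]$ is exactly what is needed; the only places worth tightening in a final write-up are (i) making explicit that the base case $n=1$ sidesteps the issue that $K_1$ has no corner by the given definition, and (ii) spelling out, in the claim that a retract of a cop-win graph is cop-win, that the cop in $G-v_1$ tracks $r$ applied to a virtual cop running the $G$-strategy, and that each such projected step is a legal move because $r$ is a (reflexive) graph homomorphism. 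These are cosmetic; the argument is sound.
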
 
For more information on the game of Cops and Robbers, see \cite{CRbook}; we follow most of the notation in that book. For additional graph theoretic terminology and notation, see \cite{GTbook}.


\section{Tools for bounding cop-throttling number}\label{sbasic}

This section contains some elementary observations about 
cop-throttling, as well as a result on clique sums of graphs. The observations and clique sum result are used as tools in obtaining bounds for the cop-throttling number of graphs in later sections.

As mentioned in the introduction, when investigating cop-throttling, we often restrict the number of cops by assuming $c(G) \leq k \leq \gamma(G)$ and notice that when $k = \gamma(G)$, $\capt_k(G) = 1$. The following observation is a direct consequence.

\begin{obs}\label{thc-dom} For every graph $G$,  $\thc(G)\le \gamma(G)+1$.
\end{obs}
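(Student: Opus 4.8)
The plan is to exhibit one particular value of $k$ that already witnesses the bound, namely $k=\gamma(G)$. First I would fix a minimum dominating set $D$ of $G$, so $|D|=\gamma(G)$, and let the cops open the game by placing exactly one cop on each vertex of $D$. Since $D$ is dominating, every vertex of $G$ lies in $N[d]$ for some $d\in D$; hence wherever the robber chooses to start, say at a vertex $v$, there is a cop at distance at most $1$ from $v$. On the cops' very first move that cop steps onto $v$ (or is already there), and a capture has occurred. This shows $\capt_{\gamma(G)}(G)\le 1$.

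The conclusion is then immediate from the definition of the cop-throttling number: since $\gamma(G)\le n$, we have $\thc(G)=\min_{k\in[n]}\left(k+\capt_k(G)\right)\le \gamma(G)+\capt_{\gamma(G)}(G)\le\gamma(G)+1$.

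I expect no real obstacle here, as the statement is essentially a restatement of the fact, recorded just before it, that $\capt_k(G)=1$ whenever $\gamma(G)\le k<n$. The only two points that merit a sentence of care are that the robber is placed after the cops (so a dominating placement must, and does, handle every possible robber start at once), and the turn order within a round (cops move first, then the robber), which is what guarantees the capturing move genuinely lands inside the first round before the robber can relocate.
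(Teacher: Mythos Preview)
Your argument is correct and matches the paper's own reasoning exactly: the paper records that $\capt_k(G)=1$ whenever $k=\gamma(G)$ (by placing cops on a minimum dominating set) and simply states the observation as a direct consequence. Your added remarks about placement order and turn order are accurate and do no harm, but the paper does not spell them out either.
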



The bound in Observation \ref{thc-dom} is tight. For example, consider the complete graph $K_n$, which has $\thc(K_n)=2=\gamma(K_n)+1$.  However, the gap between $\thc(G)$ and $\gamma(G)+1$ can be arbitrarily large, as shown in the next example.

\begin{ex}\label{ex:SW} Let $m\ge 3$.  A wheel $W_{m+1}$ on $m+1$ vertices  is constructed by adding a dominating vertex to a cycle $C_m$; the vertex of degree $m$ is called the {\em center} and the other vertices are called {\em cycle vertices}.  Define the {\em stellated wheel} $SW_{2m+1}$ to be the graph constructed from  $W_{m+1}$ by adding $m$ new vertices with each new vertex adjacent to two consecutive cycle vertices, resulting in a graph of order $2m+1$.  The graph $SW_{21}$ 
is  shown in Figure \ref{fig:notdomex}.  We see that $\gamma(SW_{2m+1})=\lc\frac m 2\rc$ because choosing every other cycle vertex dominates the graph, and a dominating set must contain a neighbor of every degree two vertex.  We claim that $\thc(SW_{2m+1})=3$, showing that the cop-throttling number can be arbitrarily less than domination number.  By placing one cop on the center, the robber can be caught  in at most two rounds, so $\thc(SW_{2m+1})\le 3$.  That $\thc(SW_{2m+1})\ge 3$ is clear from the fact that  
 $\capt_k(SW_{2m+1}) \geq 2$ when $\lc \frac m 2 \rc>k $. (See also Proposition \ref{thc=3}, which characterizes graphs $G$ with $\thc(G)=3$.)\vspace{-4pt}

\begin{figure}[h!] \begin{center}
\scalebox{.4}{\includegraphics{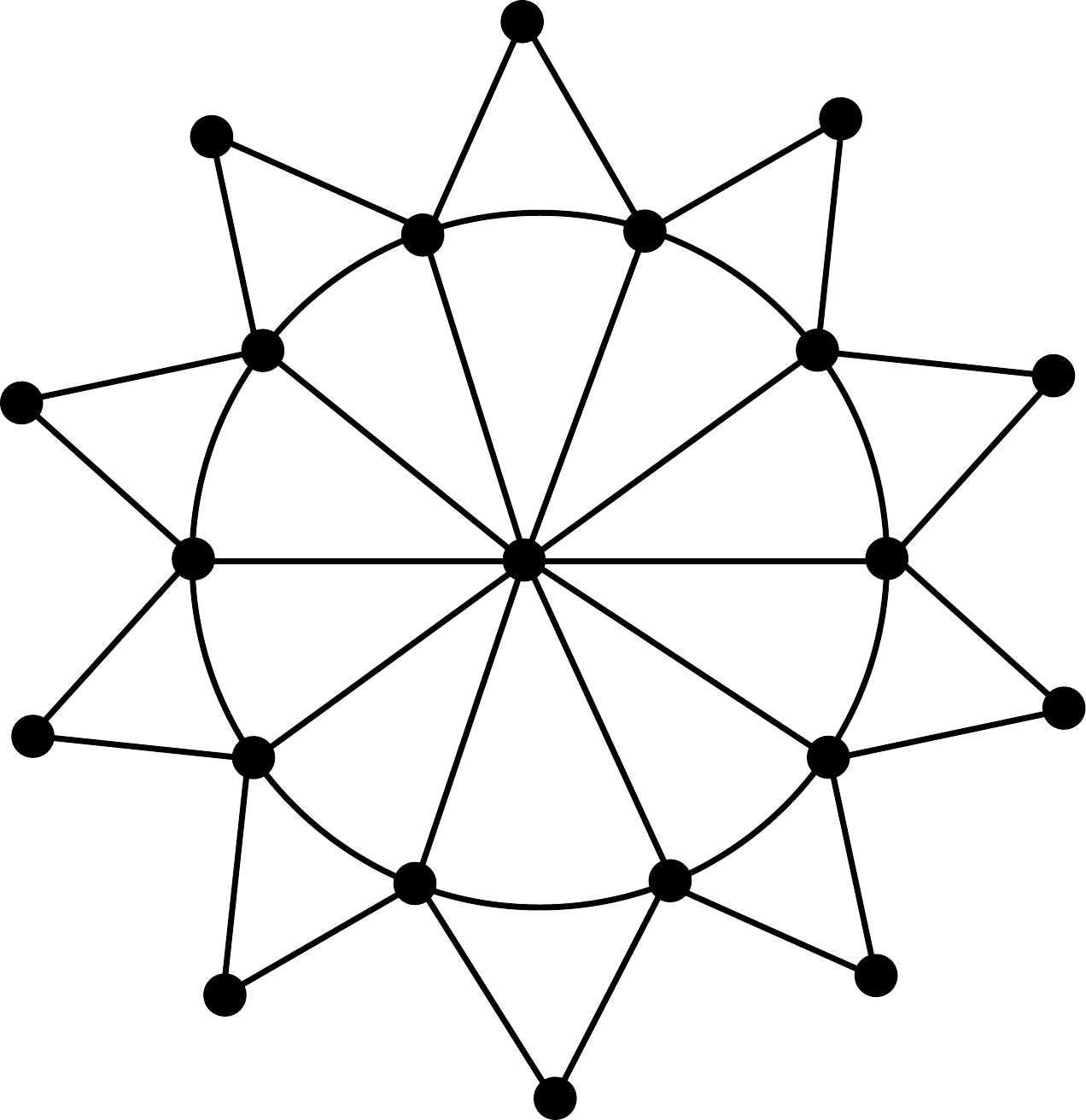}}
\caption{\label{fig:notdomex} The graph $SW_{21}$  \vspace{-15pt}}
\end{center}
\end{figure}
\end{ex}

Recall that an induced subgraph $H$ of $G$ is a \emph{retract} of a graph $G$ if there is a mapping $\varphi:G\to H$ whose restriction to $H$ is the identity and such that $uv\in E(G)$ implies $\varphi(u)\varphi(v)\in E(H)$ or $\varphi(u)=\varphi(v)$. 
Retracts are an important tool in the game of Cops and Robbers. The following theorem was proved in \cite{BPPR17} for the capture time with $k$ cops on a graph. 

\begin{thm} {\rm \cite{BPPR17}} \label{capt_retract}
Suppose that $V(G) = V_1 \cup V_2 \cup \cdots \cup V_t$ where $G[V_i]$ is a retract of $G$ for each $i$, and $k = \sum_{i = 1}^t k_i$. Then\vspace{-4pt} \[\capt_k(G) \leq \max_{1 \leq i \leq t} \capt_{k_i}(G[V_i]).\vspace{-4pt}\]
\end{thm}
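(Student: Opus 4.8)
\emph{Proof idea.} The plan is to use a \emph{shadow} (image) strategy, the same device that yields the classical bound $c(G)\le\sum_i c(G[V_i])$ when a graph is covered by retracts. Partition the $k$ cops into groups $C_1,\dots,C_t$ with $|C_i|=k_i$, and have group $C_i$ play entirely inside $G[V_i]$ while pursuing not the real robber but its image under a retraction onto $G[V_i]$.

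In detail, I would fix for each $i$ a retraction $\varphi_i\colon G\to G[V_i]$ and an optimal cop strategy $\sigma_i$ for $k_i$ cops on $G[V_i]$, so that $\sigma_i$ captures any robber on $G[V_i]$ within $T_i:=\capt_{k_i}(G[V_i])$ rounds; one may assume $T:=\max_i T_i<\infty$, as otherwise the claimed inequality is vacuous. Place the cops of $C_i$ at the starting configuration prescribed by $\sigma_i$ (a configuration in $V_i\subseteq V(G)$). In each round, group $C_i$ moves according to $\sigma_i$ applied to the ``virtual'' game on $G[V_i]$ whose robber --- the \emph{shadow} --- always occupies $\varphi_i(r)$, where $r$ is the real robber's current vertex. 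The first thing to check is that the shadow is a \emph{legal} robber in the virtual game: if the robber goes from $u$ to $v$ in $G$ (possibly $u=v$), the defining property of a retraction forces $\varphi_i(u)\varphi_i(v)\in E(G[V_i])$ or $\varphi_i(u)=\varphi_i(v)$, so the shadow's move is legal. Hence $\sigma_i$ captures the shadow by some round $\tau_i\le T_i\le T$.

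The crux is then to show that once group $C_i$ has a cop on the shadow it can keep one there: I would maintain the invariant that for every round $t\ge\tau_i$, immediately after the cops' move in round $t$, some cop of $C_i$ occupies $\varphi_i(r)$ for the robber's then-current vertex $r$. The base case $t=\tau_i$ is exactly the capture of the shadow in the virtual game. For the inductive step, if a cop $c\in C_i$ sits on $\varphi_i(u)$ after round $t$ and the robber then moves $u\to v$, then in round $t+1$ cop $c$ may legally move $\varphi_i(u)\to\varphi_i(v)$ (retraction property again), re-establishing the invariant. Taking $t=\max_i\tau_i\le T$, after that round's cop move \emph{every} group has a cop on its shadow; since $V(G)=\bigcup_iV_i$ the robber lies in some $V_j$, and $\varphi_j$ is the identity on $V_j$, so $\varphi_j(r)=r$ and the robber is caught --- within $t\le T=\max_i\capt_{k_i}(G[V_i])$ rounds (an earlier capture only helps). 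The step I expect to require the most care is the turn-order bookkeeping here: the cops must commit to a move before seeing the robber's reply, so the shadow-tracking cop is always ``one robber-step behind,'' and one has to check that re-synchronizing with the shadow at the start of each round is both legal and sufficient for capture precisely when the real robber happens to enter $V_j$. The small degenerate cases $T_i\in\{0,\infty\}$ are immediate: $\infty$ makes the inequality trivial, and $T_i=0$ means $C_i$ already occupies all of $V_i$ from the start.
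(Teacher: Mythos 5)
Your proposal is the shadow-strategy argument, which is exactly the approach the paper sketches (the theorem itself is cited from \cite{BPPR17}, and the paper's accompanying paragraph describes precisely this: each team of $k_i$ cops chases the image of the robber under the retraction onto $G[V_i]$, and after $\max_i \capt_{k_i}(G[V_i])$ rounds every team sits on its shadow, so the team whose retract contains the robber has made an actual capture). Your added details --- legality of the shadow's moves, the invariant that a cop can stay on the shadow once it has reached it, and the final appeal to $\varphi_j$ being the identity on $V_j$ --- are correct and consistent with the paper's (and the original reference's) argument.
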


The proof of the above is achieved by specifying a strategy for the cops so that the robber is always captured within a certain number of rounds, producing an upper bound for the capture time. In particular, each of the $t$ teams of cops plays using the \emph{shadow strategy} on its retract: The $k_i$ cops in $G[V_i]$ chase the image of the robber under the retract without leaving that subgraph. After $\max_{i} \capt_{k_i}(G[V_i])$ rounds have passed, every team of cops has caught their image of the robber, with at least one team capturing the actual robber.

It is also possible to determine a lower bound for the capture time by specifying a strategy for the robber in which capture occurs within a certain number of rounds. We will use this observation and the above to establish bounds on the cop-throttling numbers of clique sums of graphs.

Suppose that $G$ and $H$ are graphs such that $V(G) \cap V(H) = K_m$, the complete graph on $m$ vertices (also known as a {\em clique}), and each of $G$ and $H$ has order greater than $m$. Then $G\cup H$ is called the {\em clique sum of $G$ and $H$}. We can consider clique sums as a method for constructing new graphs from old; if $G$ and $H$ both contain a copy of $K_m$ then we can identify those vertices to form a new graph. In the special case when $m = 1$, $G\cup H$ is called the {\em vertex sum of $G$ and $H$ at $v$}, where $V(G) \cap V(H) = \{v\}$.

\begin{thm} \label{cliquesum}
Let $G_1$ and $G_2$ be graphs with $G_1\cap G_2=K_m$, and let $G=G_1\cup G_2$. Let $k_1$, $k_2$, $p_1$, and $p_2$ be numbers such that for $i\in \{1,2\}$, $k_i$ cops can always catch a robber in $G_i$ in $p_i$ rounds, and $k_i+p_i=\thc(G_i)$. Then, \vspace{-4pt}
\[
\max\{k_1+p_1,k_2+p_2\}\leq \thc(G)\leq k_1+k_2+\max\{p_1,p_2\},\vspace{-4pt}
\]
and both bounds are tight.
\end{thm}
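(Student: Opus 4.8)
The plan is to establish the two inequalities separately, exploiting that each $G_i$ is a retract of $G$, and then to exhibit small clique sums realizing each bound with equality.

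For the upper bound, the first step is to observe that $G_1$ (and symmetrically $G_2$) is a retract of $G$: fix a vertex $x_0$ of the shared clique $K_m$ and let $\varphi_1 : V(G)\to V(G_1)$ be the identity on $V(G_1)$ and send every vertex of $V(G_2)\setminus V(K_m)$ to $x_0$. Since in a clique sum the only edges joining $V(G_1)$ to $V(G_2)$ lie inside the complete graph $K_m$, every edge of $G$ is either preserved by $\varphi_1$ or collapsed to a single vertex, so $\varphi_1$ is a retraction. Because $V(G)=V(G_1)\cup V(G_2)$ and $G[V(G_i)]=G_i$, I would then apply Theorem~\ref{capt_retract} with these two retracts and $k=k_1+k_2$ to get $\capt_{k_1+k_2}(G)\le\max\{\capt_{k_1}(G_1),\capt_{k_2}(G_2)\}\le\max\{p_1,p_2\}$, the last step using the hypothesis that $k_i$ cops always win on $G_i$ in $p_i$ rounds. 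Adding $k_1+k_2$ and invoking the definition of $\thc$ finishes this direction.

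For the lower bound, the key fact to prove is: if $H$ is a retract of $G$ via $\varphi$, then $\capt_k(H)\le\capt_k(G)$ for every $k$, and hence $\thc(H)\le\thc(G)$. I would prove this by having $k$ cops on $H$ shadow an optimal $k$-cop strategy on $G$ against a robber confined to $H$: whenever that $G$-strategy would place its cops at $u_1,\dots,u_k$ while the robber sits at $r\in V(H)$, the $H$-cops move to $\varphi(u_1),\dots,\varphi(u_k)$, which is a legal move in $H$ since $\varphi$ sends edges to edges or to single vertices; and if $u_j=r$ then $\varphi(u_j)=\varphi(r)=r$, so the shadowed team captures no later than the original. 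Applying this with $H=G_1$ and $H=G_2$ yields $\thc(G)\ge\max\{\thc(G_1),\thc(G_2)\}=\max\{k_1+p_1,k_2+p_2\}$.

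Finally, for tightness: the lower bound is attained, for instance, by the vertex sum $G$ of two copies of $K_t$ ($t\ge 2$) at a vertex $v$, taking $G_1=G_2=K_t$ and $k_1=k_2=p_1=p_2=1$; here $k_i+p_i=2=\thc(K_t)$ and $v$ dominates $G$, so $\thc(G)=2=\max\{k_1+p_1,k_2+p_2\}$. The upper bound is attained by $G=P_4$ realized as the vertex sum of $G_1=P_2$ and $G_2=P_3$, again with $k_1=k_2=p_1=p_2=1$, where a short computation gives $\thc(P_4)=3=k_1+k_2+\max\{p_1,p_2\}$. The step I expect to require the most care is the retract inequality underlying the lower bound, since it amounts to simulating a game on $G$ inside the retract and checking move-legality and the timing of capture round by round; everything else is routine bookkeeping, including the small capture-time computations such as $\capt_1(P_4)=2$.
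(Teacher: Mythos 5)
Your proposal is correct, and its overall architecture matches the paper's: the upper bound is obtained exactly as in the paper, by exhibiting $G_1$ and $G_2$ as retracts of $G$ (collapsing the opposite side onto a vertex of the shared clique) and invoking Theorem~\ref{capt_retract} with $k_1+k_2$ cops. Where you genuinely diverge is the lower bound. The paper argues directly with a robber confined to $V(G_1)$ and asserts, somewhat informally, that a cop starting outside $V(G_1)$ ``receives no advantage'' over one starting in the clique; your route instead isolates and proves the general lemma that a retract $H$ of $G$ satisfies $\capt_k(H)\le\capt_k(G)$ for all $k$ (cops in $H$ shadow, via $\varphi$, an optimal $G$-strategy against the confined robber), whence $\thc(H)\le\thc(G)$. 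This is a cleaner and more reusable formalization of the same underlying idea, and your round-by-round legality and capture checks are the right things to verify; note the one point worth making explicit is that the optimal $G$-strategy is guaranteed to work against \emph{every} robber, in particular against one restricted to $V(H)$, which is what licenses the simulation. Your tightness witnesses also differ from the paper's: you use the vertex sum of two copies of $K_t$ for the lower bound and $P_4=P_2\cup P_3$ for the upper bound, both of which check out ($\thc(P_4)=3$ via $(k,p)=(1,2)$ and the nonexistence of a dominating vertex), whereas the paper builds examples with a shared $K_m$ for arbitrary $m$ by attaching paths or leaves to cliques. Since ``tight'' only requires a witness, your $m=1$ examples suffice, though they establish tightness only for vertex sums rather than for every clique size $m$.
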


\begin{proof}  We begin by establishing the upper bound.  
To see that  both $G_1$ and $G_2$ are retracts of $G$, consider  a graph mapping $G_1 \cup G_2 \to G_1$ (respectively, $G_2$) that sends all vertices of $V(G_2)\setminus V(G_1)$ (respectively, $V(G_1)\setminus V(G_2)$) to a single vertex in the intersection. Then, by playing with $k_1$ cops in $G_1$ and $k_2$ cops in $G_2$,  the capture time with $k_1 + k_2$ cops in $G$ is bounded above by $\max\{p_1, p_2\}$ by Theorem \ref{capt_retract}. Hence, the upper bound holds.

For the lower bound, we specify a strategy for the robber in which the robber starts and remains inside $V(G_1)$ until caught (and that this is known to the cops). We claim that for this strategy of the robber, $k_1$ cops will be sufficient to catch the robber in $p_1$ rounds. Indeed, no advantage is offered to the cops by the existence of vertices outside $V(G_1)$, since a cop starting on a vertex in $V(G_2)\setminus V(G_1)$ receives no advantage compared to  starting in $K_m$. 
Since $k_1$ cops can catch the robber in $p_1$ rounds in $G[V(G_1)]$, and, by a similar argument, $k_2$ cops can catch the robber in $p_2$ rounds in $G[V(G_2)]$, it follows that $\max\{\thc(G_1),\thc(G_2)\}\leq \thc(G)$.

For tightness of the upper bound, let $G_1$ be the graph obtained by identifying a leaf $\ell$ of $P_4$ with a vertex of the complete graph $K_m$. Then, $\thc(G_1)=3$, since there is no dominating vertex in $G_1$, and since a cop placed at the neighbor of $\ell$ in $P_4$ can always catch the robber in two rounds. From the point of view of an upper bound, $(k,p)=(1,2)$ is better than $(k,p)=(2,1)$, which is the only other possibility to achieve $\thc(G_1)=3$.  
Let $G_2\simeq G_1$, and let $G$ be the graph obtained by identifying the copy of $K_m$ in $G_1$ to the copy of $K_m$ in $G_2$. Then, $k_1=k_2=1$ and $p_1=p_2=2$, and we claim that $\thc(G)=4=k_1+k_2+\max\{p_1,p_2\}$. Indeed, two cops placed at the two vertices which are at distance 1 from $K_m$ can catch the robber in two rounds; moreover, $G$ has no dominating set of size 2, nor is there a vertex of $G$ which is at distance at most 2 from all other vertices. Thus, $\thc(G)=4$. 

For tightness of the lower bound, let $G_1$ be the graph obtained by appending a leaf to a vertex $v$ of $K_m$; then, $\thc(G_1)=2$ since $v$ is a dominating vertex. Let $G_2\simeq G_1$, and let $G$ be the graph obtained by identifying the copy of $K_m$ in $G_1$ to the copy of $K_m$ in $G_2$ in such a way that the degree $m$ vertex in $G_1$ is identified with the degree $m$ vertex in $G_2$. Then, $\thc(G)=2=\max\{\thc(G_1),\thc(G_2)\}$, since the degree $m+1$ vertex of $G$ is a dominating vertex.
\end{proof}

The next corollary is immediate and will be used in Section \ref{high}.
\begin{cor} \label{cor:cliquesum}
Let $G_1$ and $G_2$ be graphs with $G_1\cap G_2=K_m$, and let $G=G_1\cup G_2$.  Then, 
$
 \thc(G)\leq \thc(G_1)+\thc(G_2).$
\end{cor}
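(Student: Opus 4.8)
The plan is to deduce this directly from the upper bound already established in Theorem \ref{cliquesum}. First I would observe that for each $i \in \{1,2\}$ the quantity $\thc(G_i)$ is, by definition, attained by some number of cops; that is, there is a value $k_i$ with $c(G_i) \le k_i \le \gamma(G_i)$ such that, setting $p_i := \capt_{k_i}(G_i)$, we have $k_i$ cops can always catch the robber in $G_i$ within $p_i$ rounds and $k_i + p_i = \thc(G_i)$. Thus the hypotheses of Theorem \ref{cliquesum} are met with these choices of $k_i$ and $p_i$.

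Next I would simply invoke the upper bound of Theorem \ref{cliquesum}, which gives
\[
\thc(G) \le k_1 + k_2 + \max\{p_1, p_2\}.
\]
Since $p_1, p_2 \ge 0$, we have $\max\{p_1, p_2\} \le p_1 + p_2$, and therefore
\[
\thc(G) \le k_1 + k_2 + p_1 + p_2 = (k_1 + p_1) + (k_2 + p_2) = \thc(G_1) + \thc(G_2),
\]
which is the desired conclusion.

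There is essentially no obstacle here: the corollary is a weakening of the upper bound in Theorem \ref{cliquesum}, obtained by replacing $\max\{p_1,p_2\}$ with the larger quantity $p_1 + p_2$ and then regrouping terms. The only point worth spelling out is the (routine) fact that the minimum defining $\thc(G_i)$ is achieved by a concrete choice of $k_i$ together with its associated capture time $p_i$, so that Theorem \ref{cliquesum} is genuinely applicable.
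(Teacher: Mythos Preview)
Your proof is correct and matches the paper's approach: the paper simply states that the corollary is immediate from Theorem~\ref{cliquesum}, and your argument---bounding $\max\{p_1,p_2\}$ by $p_1+p_2$ and regrouping---is exactly the intended (and only reasonable) way to make that immediacy explicit.
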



\section{Connections with other graph parameters and bounds for trees and unicyclic graphs}\label{sPSDZF} 

This section discusses connections between Cops and Robbers  and  positive semidefinite (PSD) zero forcing, applies results on PSD throttling to determine the cop-throttling numbers of paths and cycles, and establishes $O(\sqrt n)$ upper bounds for the cop-throttling and PSD throttling numbers of trees and unicyclic graphs.  We also discuss connections of cop-throttling to girth and burning number.

  First, we describe standard zero forcing and its throttling number in more detail. 
 The {\em standard color change rule} \cite{AIM08} allows changing the color of a white vertex $w$ to blue  when $w$ is the only white neighbor of a blue  vertex $v$; in this case we say that $v$ {\em forces} $w$. 
A subset $S$ of vertices that when colored blue initially can eventually force all vertices of $G$ is called a {\em standard zero forcing set}. In contrast to Cops and Robbers, in zero forcing $S$ is a set rather than a multiset, since a multiset confers no advantage over a set.
Starting with the vertices in $S$ blue, the number of time-steps required for this process to color all vertices blue (performing all possible independent forces at each  step) is the {\em standard propagation time} of the set $S$, denoted by $\pt(G,S)$ \cite{proptime}.  Butler and Young \cite{BY13throttling}  define $\thr(G,S)=|S|+\pt(G,S)$ for  $S\subseteq V(G)$ and define the \textit{throttling number} of $G$ as $\thr(G)=\min \{ \thr(G,S)\!:\! S\text{ is a zero forcing set}\}$. 

PSD  throttling uses PSD zero forcing,  which was introduced in \cite{smallparam} to derive an upper bound on the maximum nullity of a positive semidefinite matrix whose pattern of nonzero off-diagonal entries is described by the graph. For  a set $S$  of blue vertices, let $W_1,...,W_k$ be the sets of white vertices corresponding to the $k$ connected components of $G-S$  (it is possible that $k=1$). 
The \textit{PSD color change rule} allows changing the color of $w_i\in W_i$ from white to blue  when $w_i$ is the only white neighbor of $v$ in $G[W_i \cup S]$; in this case we say that $v$ {\em forces} $w_i$.  
A subset $S$ of  vertices that when   colored blue initially can eventually force all vertices of $G$ under the PSD color change rule is called a {\em PSD zero forcing set}. 
The minimum cardinality of a PSD zero forcing set for $G$ is the {\em PSD zero forcing number} of $G$ and is denoted by $\Z_+(G)$.  
Starting with the vertices in $S$ blue, the number of time-steps required for this process to color all vertices blue (performing all possible independent forces at each  step) is the {\em PSD propagation time} of set $S$, denoted by $\ptp(G;S)$;  if $S$ is not a PSD zero forcing set, then $\ptp(G;S)$ is infinity.   The {\em PSD propagation time} of graph $G$ is $\ptp(G)=\min\{\ptp(G;S)\!:\! S\mbox{ is a minimum PSD zero forcing set of }G\}$ \cite{PSDpropTime}.  In \cite{PSDthrottle} Carlson et al.~define $\thp(G;S)=|S| + \ptp(G;S)$ and the {\em PSD throttling number} of a graph $G$   as  %
$\thp(G) = \min_{S\subseteq V(G)} \thp(G;S) $.  

Since we want to exhibit a parallel between cop-throttling and PSD throttling, we  define the {\em capture time for $S\subseteq V(G)$}, denoted by $\capt(G;S)$, to be the maximum   of the number of rounds needed to catch the robber (over all robber placements) when the cops are placed on the elements of the multiset $S$.  We say $S\subseteq V(G)$ is a {\em capture set} for  $G$ if placing the cops on the vertices in $S$ ensures that the robber can be caught regardless of the strategy used.  If $S$ is not a capture set for $G$,  then $\capt(G;S)$ is infinity.   Observe that the  $k$-capture time of $G$ is
$\capt_k(G)=\min_{|S|=k}  \capt(G;S)$.  
  For $S\subseteq V(G)$, we define  $\thc(G;S)=|S| + \capt(G;S)$ and note that 
$ \thc(G) = \min_{S\subseteq V(G)} \thc(G;S).$


\begin{thm}\label{ptc-le-ptp}  Let $S\subseteq V(G)$ be a PSD  zero forcing set.  Then, $S$ is a capture set, $\capt(G;S)\le \ptp(G;S)$,   $\thc(G;S)\le \thp(G;S)$,  
$c(G)\le \Zp(G)$, 
and $\thc(G)\le \thp(G)$.
\end{thm}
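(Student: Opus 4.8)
The plan is to isolate the substantive claim — every PSD zero forcing set $S$ is a capture set with $\capt(G;S)\le\ptp(G;S)$ — deduce the rest formally, and prove the claim with a cop strategy that imitates a PSD forcing process.

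\textbf{Reductions.} All of the other conclusions follow from $\capt(G;S)\le\ptp(G;S)$ for an arbitrary PSD zero forcing set $S$ (this also gives $\capt(G;S)<\infty$, i.e.\ $S$ is a capture set, since $\ptp(G;S)<\infty$ for such $S$). Adding $|S|$ to both sides yields $\thc(G;S)\le\thp(G;S)$. Taking $S$ to be a minimum PSD zero forcing set gives $c(G)\le|S|=\Zp(G)$, because $|S|$ cops on $S$ then capture the robber. Finally, choosing $S$ to attain the minimum defining $\thp(G)$ — such an $S$ must be a PSD zero forcing set, since otherwise $\thp(G;S)=\infty$ — gives $\thc(G)\le\thc(G;S)\le\thp(G;S)=\thp(G)$. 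So it remains to prove: if $S$ is a PSD zero forcing set, then $|S|$ cops placed on the vertices of $S$ capture the robber within $p:=\ptp(G;S)$ rounds.

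\textbf{Forcing trees and the cop strategy.} Fix a sequence of PSD forces achieving $p$ time-steps; this yields blue sets $S=B_0\subsetneq B_1\subsetneq\cdots\subsetneq B_p=V(G)$ and, for each $w\notin S$, the vertex that forced it. Tracing the ``forced by'' relation back to $S$ partitions $V(G)$ into forcing trees $\{T_v:v\in S\}$, each an induced subtree of $G$ rooted at the corresponding $v$. Two facts about this partition drive the argument. First, a vertex forced at step $t$ has its parent already blue at step $t-1$, so every path in $T_v$ from its root has length at most $p$; hence $\operatorname{depth}(T_v)\le p$. Second — and this is the feature specific to the PSD color-change rule, absent for the standard rule — when a blue vertex forces it does so into a component of the current white graph in which it has a unique white neighbor, so distinct branches of $T_v$ get forced into distinct white components, and these components remain separated thereafter; consequently the (connected) region still available to the robber meets each tree $T_v$ along a single branch at any moment. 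Place one cop $c_v$ on each $v\in S$; cop $c_v$ plays a shadow-type strategy inside $T_v$, moving one step along $T_v$ toward the robber each round — by the branch-separation property this is an unambiguous direction — so that the cleared portion of $T_v$ along the robber's branch grows by one vertex per round and $c_v$ stays on its boundary.

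\textbf{Conclusion and the main obstacle.} Since the cleared vertices along the robber's branch of each $T_v$ are guarded by $c_v$, and the $T_v$ cover $G$, the robber is confined after round $t$ to a single component of $G$ minus the cleared set; this region strictly shrinks each round, because the cop clearing a vertex adjacent to the robber's region removes it (and captures the robber outright if the robber was sitting there). As $\operatorname{depth}(T_v)\le p$, after $p$ rounds every cop has swept its tree along the robber's branch to its deepest vertex, so the robber's region is empty and the robber has been captured: $\capt(G;S)\le p=\ptp(G;S)$. (Alternatively, one can package this through Theorem~\ref{capt_retract} applied to the forcing trees, once one verifies that each $T_v$ is a retract of $G$ that a single cop, started at the root, can clear within $p$ rounds.) The main obstacle is precisely the branch-separation property of PSD forcing trees together with the careful design of the cop strategy — keeping each $c_v$ positioned at the one boundary vertex of the robber's region inside $T_v$ throughout, with the cops moving first — so that a single cop suffices per forcing tree and the sweep finishes in at most $\ptp(G;S)$ rounds. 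This is where the PSD rule (rather than standard zero forcing, for which the conclusion fails) is essential; the depth bound and the single-cop-on-a-tree estimate are routine once this structure is available.
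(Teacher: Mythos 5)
Your reductions are fine, and your overall plan---simulate a PSD forcing process with cops, one cop per forcing tree---is the same idea the paper uses. But the cop strategy you actually specify is not sound, and the flaw sits exactly at the point you flag as the main obstacle. You have cop $c_v$ ``move one step along $T_v$ toward the robber each round,'' and you then assert that the swept prefix stays inaccessible to the robber. That assertion needs the PSD rule applied \emph{at the moment the cop vacates a vertex}, and with your depth-driven schedule it fails: a vertex $a$ on the robber's branch may not perform its force until a late time-step because it still has a second white neighbor $w$ in the robber's component (with $w$ belonging to another cop's forcing tree), yet your cop steps off $a$ at round $2$. The robber can then reach $a$ through $w$, land on a vacated ``cleared'' vertex, and walk back through $v$ into territory the cops have abandoned; the claims that the robber's region strictly shrinks and that capture occurs within $\max_v(\mbox{depth of }T_v)\le p$ rounds both collapse. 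Concretely, take $S=\{v,s\}$ with edges $va, ab, aw, bw$ together with a path $s,x_1,x_2,x_3,w$: here $\ptp(G;S)=5$ and $a$ forces $b$ only at step $5$, but your cop reaches $b$ at round $2$, and a robber starting at $b$ moves to $w$ and then onto the unguarded $a$, after which it is never caught by the prescribed moves. The repair---which is what the paper does---is to synchronize the cops with the forcing schedule rather than with tree depth: the cop on $x$ moves to $y$ exactly at the time-step at which $x$ forces $y$ in the fixed forcing sequence, and only when $y$ lies in the robber's current white component; otherwise it waits. Then whenever a cop vacates $x$, the target was the \emph{unique} white neighbor of $x$ in the robber's component, so the robber (which by induction always occupies a white vertex of that component) can never step onto a vacated blue vertex; the paper gets this as a one-line contradiction with the PSD color change rule, and the robber's component is emptied after exactly $\ptp(G;S)$ steps.

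Two smaller problems. Your branch-separation claim that the robber's available region meets each $T_v$ in a single branch is false at time $0$: several children of $v$ can lie in the same component of $G-S$ (they are forced at different later times, after that component has split), so ``toward the robber'' is not initially well defined; with the synchronized strategy this resolves itself, since the robber's nested sequence of white components selects at most one child of $v$ over the whole game. And the parenthetical appeal to Theorem~\ref{capt_retract} does not go through as stated, since forcing trees are in general not retracts of $G$.
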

\bpf
We convert a PSD  forcing  process  for $S$ into a robber capture process (set of moves  for the cops) using the same number of time-steps as follows:  First place the cops on the vertices in   $S$.  The robber then chooses a location.  At each time-step, the cops move only into the component of $G-S$ containing the robber and ignore the other components; thus, the location of each cop is uniquely determined at every time-step.  We show  that at any stage of PSD  zero forcing, the robber can never get to any  vertex colored blue  (regardless of whether it contains a cop) without passing through a vertex occupied by a cop (a {\em cop vertex}) and therefore getting captured.  Thus, coloring the component containing the robber blue is sufficient to catch the robber.  

 Initially, there is a cop on each  blue vertex (the vertices of $S$), so clearly the robber cannot access a blue vertex without being captured.  Now, assume we have a set $B$ of blue vertices obtained by performing robber catching moves as described.
Suppose the robber is on a white vertex $w$ and can move onto a blue vertex $v$ that is not a cop vertex.  Then $w$ is adjacent to $v$,  and $v$ was formerly occupied by a cop who moved to another vertex $u$  in the same component containing $w$ by performing a PSD force.  This contradicts the PSD forcing rule because $v$ was adjacent to two white vertices ($u$ and $w$) in the same component when $v$ forced $u$. Thus, the robber can never get to any blue vertex without   getting caught, so the robber will indeed be caught through this process.  Therefore, $S$ is a capture set and $\capt(G;S)\le\ptp(G;S)$.   The remaining statements are immediate consequences of this.   \epf


 For a positive integer $k$ and a graph $G=(V,E)$,  the {\em $k$-center radius} is \vspace{-3pt}
\[\rad_k(G)=\min_{S\subseteq V, |S|=k}\max_{v\in V}\dist(v,S).\vspace{-3pt}\]
Note that $\rad_1(G)=\rad(G)$ (the usual radius of $G$), and any vertex $x$ such that\break $\max_{v\in V}\dist(v,\{x\})=\rad(G)$ is called a {\em center}.   It is shown in  \cite[Lemma 2]{BPPR17} that $\capt_k(G)\ge \rad_k(G)$ for any graph $G$.  This result and Theorem \ref{ptc-le-ptp} imply that 
$\ptp(G;S)\ge \rad_{|S|}(G)$ for any graph $G$  (this is is also clear from the definitions).  As noted in  \cite[Corollary 2]{BPPR17}, $\capt_k(T)= \rad_k(T)$ for any tree $T$.

\begin{rem}\label{kradth} For any graph $G$ and $1\le k \le \gamma(G)$, $\min_k(k+\rad_k(G))\le \thc(G)$.   For any tree $T$, 
$\thc(T)=\min_k(k+\rad_k(T))$.  Note that this leads to a polynomial time algorithm for the cop-throttling number of a tree, since the $k$-radius of a tree can be computed in polynomial time  \cite{MT}.
\end{rem}


\subsection{Throttling for Cops and Robbers on trees}

This section   applies results on PSD throttling to determine the cop-throttling numbers of paths  and to establish $2\sqrt n$ as an upper bound for the cop-throttling and PSD throttling numbers of trees.

\begin{thm}\label{tree-ptc-eq-ptp} Suppose $T$ is a tree.  Then for $S\subseteq V(T)$,  $\capt(T;S)=\ptp(T;S)$ and  $\thc(T;S)=\thp(T;S)$.  Furthermore, $\thc(T)=\thp(T)$.
\end{thm}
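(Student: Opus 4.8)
The plan is to prove the two inequalities $\capt(T;S)\le \ptp(T;S)$ and $\ptp(T;S)\le \capt(T;S)$ for every $S\subseteq V(T)$; the first is already available from Theorem~\ref{ptc-le-ptp}, so the real work is the reverse inequality on trees. Once the pointwise equality $\capt(T;S)=\ptp(T;S)$ is in hand, adding $|S|$ to both sides gives $\thc(T;S)=\thp(T;S)$, and minimizing over all $S\subseteq V(T)$ yields $\thc(T)=\thp(T)$.

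For the inequality $\ptp(T;S)\le \capt(T;S)$, I would argue that on a tree the optimal cop strategy (one achieving the $\capt(T;S)$ bound against every robber placement) is governed entirely by distances, and that the ``frontier'' of captured territory advances exactly like a PSD forcing wave. Concretely: in a tree, once a single cop occupies a vertex $v$, the vertex $v$ disconnects the tree into the components of $T-v$, and by chasing the robber's image the cop can, in one round, either capture the robber or push into the unique component the robber lies in. Inductively, after $t$ rounds the set of vertices the robber is provably excluded from (because passing through them would mean capture) forms a connected set $B_t$ containing $S$, and $B_{t+1}$ is obtained from $B_t$ by, in each connected component $W_i$ of $T-B_t$, having the relevant cop advance one step along the unique path toward the robber. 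Since $T$ is a tree, each vertex $w$ on the boundary of $W_i$ has a unique neighbor in $B_t\cup W_i$ other than its children in $W_i$, so ``advance one step'' is precisely the PSD color change rule being applied once to that boundary. Thus a capture process for $S$ in $t$ rounds produces, boundary layer by boundary layer, a PSD forcing process for $S$ finishing in at most $t$ rounds, which gives $\ptp(T;S)\le \capt(T;S)$ (and in particular shows $S$ is a PSD forcing set whenever it is a capture set, on a tree).

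I would organize the argument around the distance structure: fix an optimal-against-all-placements cop assignment for the multiset $S$, and for each round $t$ define $B_t$ to be the set of vertices $v$ such that the robber, if located on $v$ at the start of round $t+1$, is captured within $t$ rounds using this strategy. One shows $B_0\supseteq S$ (after the cops' and robber's first half-move the cops sit on $S$), $B_t$ is connected and contains $S$, $B_{\capt(T;S)}=V(T)$, and the step $B_t\to B_{t+1}$ simultaneously forces one new vertex in each component of $T-B_t$ in which the cops can still make progress — exactly the simultaneous PSD forces at one time-step. The tree hypothesis is what makes each such step a legitimate single PSD force: in $T[W_i\cup S]$ the boundary vertex being passed has exactly one white neighbor (its successor toward the interior), because $T$ has no cycles.

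The main obstacle I expect is making the correspondence between ``cop advancing against an adversarial robber'' and ``PSD force'' airtight: one must be careful that the cop strategy that is optimal simultaneously for \emph{all} robber placements is the natural distance-greedy strategy on the tree, and that the set $B_t$ really does grow by exactly one vertex per component per round (not more, not less) so that the number of PSD time-steps matches $\capt(T;S)$ exactly rather than just being bounded by it. Equivalently, one can sidestep this by combining two already-cited facts: $\capt(T;S)\ge \rad_{|S|}$-type lower bounds per component, and the observation (Remark~\ref{kradth}, \cite[Corollary~2]{BPPR17}) that $\capt_k(T)=\rad_k(T)$; together with $\capt(T;S)\le \ptp(T;S)$ and the fact that PSD propagation on a tree from $S$ terminates in exactly $\max_i$ (eccentricity of $W_i$ from its boundary) rounds, which is the same distance quantity. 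I would present the direct boundary-wave argument as the primary proof and mention the distance-based alternative as a remark.
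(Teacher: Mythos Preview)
Your alternative at the end is essentially the paper's actual proof, and it is the right one to lead with. The paper's argument is three lines: by \cite[Lemma~2]{BPPR17} one has $\capt(T;S)\ge \max_{v\in V(T)}\dist(v,S)$; since every vertex of degree at least two in a tree is a cut vertex, PSD forcing from $S$ spreads outward one step per round in every direction, so $\ptp(T;S)=\max_{v\in V(T)}\dist(v,S)$; hence $\ptp(T;S)\le\capt(T;S)$, and combining with Theorem~\ref{ptc-le-ptp} gives equality.

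Your primary boundary-wave argument, by contrast, is more elaborate and has a genuine gap at exactly the point you yourself flag: you assume the optimal-against-all-placements cop strategy is the distance-greedy one, but you never prove it. Establishing that is tantamount to proving $\capt(T;S)=\max_{v}\dist(v,S)$ directly, at which point the PSD equality is immediate and the wave-tracking machinery is superfluous. There is also a subtler issue with the framing: the cops' moves depend on the robber's location, so there is no single ``capture process'' independent of the robber from which to read off a PSD forcing sequence; you would need to argue about the whole family of processes indexed by robber placements and somehow extract one PSD process, which your sketch does not do. The distance route sidesteps all of this because $\max_{v}\dist(v,S)$ is a robber-independent quantity that simultaneously lower-bounds $\capt(T;S)$ (place the robber at a farthest vertex) and equals $\ptp(T;S)$ (by the tree structure). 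Lead with that.
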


\bpf  
It is shown in the proof of  \cite[Lemma 2]{BPPR17} that 
$\capt(T;S)\ge \max_{v\in V(T)}\dist(v,S)$. Since every vertex of degree at least two in  a tree is a cut vertex, it follows  from the definitions of PSD zero forcing and PSD propagation time of a set that $\ptp(T;S)=\max_{v\in V(T)}\dist(v,S)$.  Thus, $\ptp(T;S)\le \capt(T;S)$.  In combination  with Theorem \ref{ptc-le-ptp}, this implies $\ptp(T;S)= \capt(T;S)$. 
The remaining statements now follow.
  \epf


Since for a tree $c(T)=1=\Zp(T)$, 
we see that   $\capt(T)=\ptp(T)=\rad(T)$.
  While a graph $G$ has $\Zp(G)=1$ if and only if $G$ is a tree, there are many additional graphs that have $c(G)=1$.    

The next result follows from Theorem \ref{tree-ptc-eq-ptp} and \cite[Theorem 3.2]{PSDthrottle}.
\begin{cor}  \label{thmThrP}
Let $n\geq 1$. Then \[\thc (P_n)= \lc \sqrt{2n}-\frac{1}{2} \rc.
\]
\end{cor}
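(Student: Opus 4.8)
The plan is to combine Theorem~\ref{tree-ptc-eq-ptp}, which tells us that $\thc(P_n)=\thp(P_n)$ since a path is a tree, with the known formula for the PSD throttling number of a path from \cite[Theorem 3.2]{PSDthrottle}. So the entire argument reduces to citing these two facts in sequence: first apply Theorem~\ref{tree-ptc-eq-ptp} with $T=P_n$ to get $\thc(P_n)=\thp(P_n)$, and then quote the value $\thp(P_n)=\lceil\sqrt{2n}-\tfrac12\rceil$ established in \cite{PSDthrottle}.

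If one wanted a self-contained derivation rather than a citation, the approach would be to use Remark~\ref{kradth}, which gives $\thc(P_n)=\min_k\bigl(k+\rad_k(P_n)\bigr)$ for trees. The $k$-center radius of $P_n$ is straightforward to compute: placing $k$ cops optimally on the path partitions it into $k$ subpaths whose lengths are as equal as possible, so $\rad_k(P_n)=\bigl\lceil (n-k)/(2k)\bigr\rceil$ (the center of a subpath on $m$ consecutive vertices is at distance $\lfloor (m-1)/2\rfloor$ from its endpoints, and one balances the $k$ pieces). Then one minimizes $f(k)=k+\bigl\lceil (n-k)/(2k)\bigr\rceil$ over $1\le k\le\gamma(P_n)$. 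Ignoring the ceiling, $k+\frac{n-k}{2k}=k+\frac{n}{2k}-\frac12$ is minimized near $k=\sqrt{n/2}$, giving value approximately $\sqrt{2n}-\tfrac12$; a careful treatment of the floor/ceiling and of the integer constraint on $k$ then yields exactly $\lceil\sqrt{2n}-\tfrac12\rceil$.

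The main obstacle, in the self-contained version, is the floor/ceiling bookkeeping: one must verify that the integer $k$ minimizing the true (ceiling-valued) function $f(k)$ produces the stated closed form for all $n\ge 1$, including small cases and the boundary of the range of $k$. This is the kind of argument already carried out in \cite{PSDthrottle}, so the cleanest route—and the one I would take here—is simply to invoke Theorem~\ref{tree-ptc-eq-ptp} together with \cite[Theorem 3.2]{PSDthrottle} and thereby avoid redoing the calculation.
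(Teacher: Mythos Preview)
Your proposal is correct and matches the paper's own approach exactly: the corollary is stated to follow from Theorem~\ref{tree-ptc-eq-ptp} together with \cite[Theorem 3.2]{PSDthrottle}, which is precisely the route you outline. Your optional self-contained argument via Remark~\ref{kradth} is a reasonable alternative but is not needed here.
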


We also obtain the following corollaries for trees from \cite[Section 4]{PSDthrottle}.

\begin{cor} \label{noLeaf}
Let $T$ be a tree on $n\geq 3$ vertices. Then there exists a set $S\subseteq V(T)$  that  contains no leaves  such that
$\thc(T)=\thc(T;S).  $
\end{cor}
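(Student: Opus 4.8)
The statement says that for a tree $T$ on $n\ge 3$ vertices there is a set $S\subseteq V(T)$ containing no leaves with $\thc(T)=\thc(T;S)$. By Theorem \ref{tree-ptc-eq-ptp} we have $\thc(T)=\thp(T)$ and $\thc(T;S)=\thp(T;S)$ for every $S$, so the claim is equivalent to the corresponding statement for PSD throttling, which is exactly what is referenced in \cite[Section 4]{PSDthrottle}. The plan is therefore to reduce everything to PSD throttling via Theorem \ref{tree-ptc-eq-ptp} and then invoke the cited result; but since the corollary is meant to be self-contained in this paper's language, I would also give the short direct argument below.

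\textbf{Direct argument.} Start with any optimal capture set $S_0$, i.e.\ $\thc(T;S_0)=\thc(T)$, chosen with $|S_0|$ as small as possible among optimal sets. By Remark \ref{kradth} and Theorem \ref{tree-ptc-eq-ptp}, for a tree $\capt(T;S)=\max_{v\in V(T)}\dist(v,S)$, so $\thc(T;S)=|S|+\max_{v}\dist(v,S)$. First I would observe that this quantity is \emph{monotone} under a natural leaf-replacement operation: if $\ell\in S_0$ is a leaf with unique neighbor $u$, then replacing $\ell$ by $u$ (forming $S_1=(S_0\setminus\{\ell\})\cup\{u\}$) cannot increase the covering radius. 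Indeed $\dist(v,u)\le \dist(v,\ell)$ fails in general, but one checks that for every vertex $v$ the ball structure in a tree forces $\max_v \dist(v,S_1)\le \max_v\dist(v,S_0)$: any $v$ that was covered by $\ell$ (that is, realized its distance to $S_0$ at $\ell$) has a path to $\ell$ through $u$ unless $v=\ell$ itself, in which case $\dist(\ell,S_1)\le 1\le\dist(\ell,S_0\setminus\{\ell\})$ provided $n\ge 3$ so that $S_0\setminus\{\ell\}$ is nonempty or we have just added $u$. Hence $\max_v\dist(v,S_1)\le\max_v\dist(v,S_0)$, and since $|S_1|\le|S_0|$, we get $\thc(T;S_1)\le\thc(T;S_0)=\thc(T)$, forcing equality. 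Repeating, we may assume $S_0$ has no leaves; since $|S_1|\le|S_0|$ the process strictly decreases $\sum_{v\in S}\deg_{\min}$ or keeps $|S|$ fixed while removing a leaf, so it terminates (formally, iterate on a set of minimum cardinality and, among those, one containing fewest leaves).

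\textbf{Main obstacle.} The one place that needs care is the edge case where $S_0=\{\ell\}$ is a single leaf: then removing $\ell$ leaves the empty set, which is not a capture set. But if $\thc(T)=|S_0|+\max_v\dist(v,\ell)=1+\rad_1(T;\ell)$ and $\ell$ is a leaf of a tree on $n\ge 3$ vertices, then $\max_v\dist(v,\ell)\ge 2$, whereas replacing $\ell$ by its neighbor $u$ gives $1+\max_v\dist(v,u)$ with $\max_v\dist(v,u)\le\max_v\dist(v,\ell)$, so $S_1=\{u\}$ is still optimal and $u$ is not a leaf (it has degree $\ge 2$ since $n\ge 3$ and $u$ is adjacent to $\ell$ plus at least one more vertex — unless $T=P_3$ with $u$ the center, in which case $u$ is the unique non-leaf and we are done, or $T$ has $u$ of degree... one needs $n\ge 3$ precisely here). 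So the hypothesis $n\ge 3$ is exactly what rules out the degenerate case, and handling it is the only subtlety; the rest is the straightforward monotonicity computation in trees sketched above.
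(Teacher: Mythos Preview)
Your first paragraph—reducing to the PSD statement via Theorem \ref{tree-ptc-eq-ptp} and invoking \cite[Section 4]{PSDthrottle}—is exactly the paper's proof; the paper gives nothing beyond that citation. The direct leaf-replacement argument you append is sound and is essentially what the cited reference contains; two small cleanups: in a tree the inequality $\dist(v,u)\le\dist(v,\ell)$ actually holds for \emph{every} $v\ne\ell$ (the unique $v$–$\ell$ path passes through $u$), so only $v=\ell$ needs separate treatment, and your ``main obstacle'' is not really one since the replacement yields $S_1=\{u\}\ne\emptyset$ and, for $n\ge 3$, the neighbor of a leaf is never itself a leaf.
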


\begin{cor}\label{subtree}
If $T$ is a tree with subtree $T'$, then
\[\thc(T')\le\thc(T).  \]
That is, the cop-throttling number is subtree monotone for trees.
\end{cor}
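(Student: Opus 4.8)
The plan is to reduce the subtree-monotonicity statement to the equality $\thc(T)=\thp(T)$ from Theorem~\ref{tree-ptc-eq-ptp} together with the combinatorial description $\thc(T)=\min_k(k+\rad_k(T))$ from Remark~\ref{kradth}, and then prove the corresponding monotonicity statement for $\min_k(k+\rad_k(\cdot))$ directly. So the first step is: apply Remark~\ref{kradth} to both $T$ and $T'$, so that it suffices to show $\min_{k}(k+\rad_k(T'))\le \min_{k}(k+\rad_k(T))$. Equivalently, fix a $k$ and a set $S\subseteq V(T)$ with $|S|=k$ achieving $\rad_k(T)=\max_{v\in V(T)}\dist_T(v,S)$; I want to produce a set $S'\subseteq V(T')$ with $|S'|\le k$ and $\max_{v\in V(T')}\dist_{T'}(v,S')\le \rad_k(T)$.

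The natural construction is to project $S$ onto $T'$: since $T'$ is a subtree of $T$, for each $s\in S$ let $s'$ be the vertex of $T'$ closest to $s$ in $T$ (this is well defined and unique because $T$ is a tree, so there is a unique shortest path from $s$ into the connected subtree $T'$, and its endpoint in $T'$ is the projection). Set $S'=\{\,s' : s\in S\,\}$, so $|S'|\le k$. The key step is then the distance-contraction inequality: for every $w\in V(T')$, $\dist_{T'}(w,S')\le \dist_T(w,S)$. To see this, pick $s\in S$ realizing $\dist_T(w,S)$; the unique $w$--$s$ path $P$ in $T$ starts at $w\in V(T')$ and must leave $T'$ through its projection-type vertex, and in fact $P$ passes through $s'$ (here one uses that $T'$ is connected and the tree has unique paths, so the portion of $P$ lying in $T'$ is itself a path in $T'$ from $w$ to $s'$). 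Hence $\dist_{T'}(w,s')\le \len(P\cap T')\le \len(P)=\dist_T(w,s)=\dist_T(w,S)$, giving $\dist_{T'}(w,S')\le \dist_{T'}(w,s')\le \dist_T(w,S)\le \rad_k(T)$. Taking the max over $w\in V(T')$ yields $\rad_{|S'|}(T')\le\rad_k(T)$, and since $|S'|\le k$ we get $|S'|+\rad_{|S'|}(T')\le k+\rad_k(T)$; minimizing over the choice of $k$ (the optimal one for $T$) completes the argument.

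There are two bookkeeping points to watch. First, $|S'|$ may be strictly less than $k$ (distinct vertices of $S$ can have the same projection), but that only helps, since $k+\rad_k(T')$ is not necessarily monotone in $k$ in the direction we want — however $\min_k$ absorbs this, because $\min_{j\le k}(j+\rad_j(T'))$ is certainly at most $|S'|+\rad_{|S'|}(T')$. Second, one should note the degenerate cases $|V(T')|=1$ or $|V(T')|=2$ separately only if one worries about the range $1\le k\le\gamma$ in Remark~\ref{kradth}; but $\rad_k$ is defined for all $k\ge 1$ and $\thc(T')=\min_k(k+\rad_k(T'))$ still holds, with small $T'$ trivially satisfying $\thc(T')\le 2\le\thc(T)$ anyway.

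The main obstacle is the distance-contraction lemma, specifically the claim that the $w$--$s$ geodesic in $T$ actually passes through the projection vertex $s'$. This is a soft fact about trees (uniqueness of paths and the ``gate'' property of convex subtrees: a connected subtree of a tree is a gated set, with $s'$ the gate of $s$), so the obstacle is really just writing it cleanly rather than any genuine difficulty; everything else is immediate from Remark~\ref{kradth}. Alternatively, one could bypass the $k$-radius reformulation entirely and argue directly with capture sets and the shadow strategy — projecting a capture multiset of $T$ to a capture multiset of $T'$ via a retraction of $T$ onto $T'$ (trees are ``absolute retracts'' in this sense) and invoking Theorem~\ref{capt_retract} — but the radius argument is cleaner and self-contained given Remark~\ref{kradth}.
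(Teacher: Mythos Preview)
Your argument is correct. The projection/gate argument for subtrees of trees is sound: a connected subtree $T'$ is convex in $T$, so each $s\in S$ has a unique nearest vertex $s'\in V(T')$, and for any $w\in V(T')$ the unique $s$--$w$ path in $T$ passes through $s'$; hence $\dist_{T'}(w,s')\le \dist_T(w,s)$, and the rest follows exactly as you wrote. (One small note: you announce that you will use Theorem~\ref{tree-ptc-eq-ptp}, but in fact you never do --- Remark~\ref{kradth} alone carries the whole proof.)

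This is a genuinely different route from the paper's. The paper does not argue directly at all: it invokes Theorem~\ref{tree-ptc-eq-ptp} to convert $\thc$ to $\thp$ on trees and then imports the subtree monotonicity of $\thp$ from \cite[Section~4]{PSDthrottle}. Your approach instead stays entirely within the Cops-and-Robbers/$k$-radius framework via Remark~\ref{kradth} and the gate property of convex subtrees. What your approach buys is self-containment: no appeal to the PSD throttling literature is needed, and the same projection map would serve equally well in the retract/shadow-strategy variant you sketch at the end (which in fact gives $\capt_k(T')\le\capt_k(T)$ for every $k$, a slightly stronger statement). What the paper's approach buys is brevity, at the cost of an external citation.
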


\begin{cor}\label{treeBound1}
If $T$ is a tree with radius $r$, then \[\lc{\sqrt{2(2r+1)}-\frac{1}{2}}\rc\leq\thc(T)\leq r +1.\] 
\end{cor}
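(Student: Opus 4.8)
The plan is to obtain the upper bound essentially for free from a single cop placed at a center of $T$, and to obtain the lower bound by embedding a long path into $T$ and using the subtree monotonicity of the cop-throttling number. \emph{For the upper bound:} let $c$ be a center of $T$, so that $\rad_1(T)=\operatorname{ecc}_T(c)=r$. By Remark~\ref{kradth}, $\thc(T)=\min_k\bigl(k+\rad_k(T)\bigr)\le 1+\rad_1(T)=r+1$; equivalently, one cop placed at $c$ captures the robber in at most $r$ rounds, so $\thc(T)\le\thc(T;\{c\})=1+r$ by Theorem~\ref{tree-ptc-eq-ptp}.

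\emph{For the lower bound,} the key point is that if $T$ contains $P_{2r+1}$ as a subtree, then Corollary~\ref{subtree} gives $\thc(P_{2r+1})\le\thc(T)$, while Corollary~\ref{thmThrP} with $n=2r+1$ evaluates $\thc(P_{2r+1})=\lc\sqrt{2(2r+1)}-\tfrac12\rc$, which is exactly the asserted bound. So the whole lower bound reduces to producing a path on $2r+1$ vertices inside $T$. I would build it from a center $c$: since $\operatorname{ecc}_T(c)=r$ there is a vertex $u$ with $\dist(u,c)=r$; letting $C_u$ be the component of $T-c$ containing $u$, it then suffices to exhibit a second vertex $w$ with $\dist(w,c)=r$ lying in a different component $C_w\ne C_u$ of $T-c$, for then the unique $u$--$w$ path passes through $c$, has length $2r$, and is the required $P_{2r+1}$.

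\emph{The main obstacle} is precisely the existence of such a $w$. This is immediate when the center of $T$ is a single vertex (equivalently $\operatorname{diam}(T)=2r$): if all vertices at distance $r$ from $c$ were confined to the single component $C_u$, then the neighbor $c'$ of $c$ inside $C_u$ would satisfy $\operatorname{ecc}_T(c')\le r$, forcing $\{c,c'\}$ to be a center edge and $\operatorname{diam}(T)=2r-1$, a contradiction. When the center of $T$ is an edge the diameter is only $2r-1$, a longest path supplies just $P_{2r}\subseteq T$, and the same two corollaries then give only $\thc(T)\ge\thc(P_{2r})=\lc 2\sqrt r-\tfrac12\rc$; closing the remaining gap to $\lc\sqrt{2(2r+1)}-\tfrac12\rc$ for edge-centered trees is the step I expect to be the hardest part of the argument.
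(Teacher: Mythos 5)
Your upper bound is correct and is the intended one: a single cop at a center of $T$ captures in $\rad_1(T)=r$ rounds, so $\thc(T)\le r+1$. Your reduction of the lower bound to exhibiting $P_{2r+1}$ as a subtree of $T$ (via Corollary \ref{subtree} and Corollary \ref{thmThrP}) is also the right mechanism, and your argument that a tree with a one-vertex center (diameter $2r$) contains two vertices at distance $r$ from the center in distinct components of $T-c$, hence a copy of $P_{2r+1}$, is sound. Since the paper gives no proof beyond citing the tree results of \cite{PSDthrottle} together with Theorem \ref{tree-ptc-eq-ptp}, this is essentially the argument underlying the citation.

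The step you flag as the hardest part --- closing the gap for edge-centered trees --- cannot be closed, because the stated lower bound is false for such trees. Take $T=P_6$ with vertices $v_1,\dots,v_6$: its center is the edge $\{v_3,v_4\}$, its radius is $r=3$, and Corollary \ref{thmThrP} gives $\thc(P_6)=\lc\sqrt{12}-\frac12\rc=3$, whereas the claimed lower bound is $\lc\sqrt{14}-\frac12\rc=4$ (concretely, two cops at $v_2$ and $v_5$ dominate $P_6$ and capture in one round). The same failure occurs for $T=P_{2r}$ whenever $m(m+1)=4r$ for some integer $m$, e.g. $r=3,5,14,18,\dots$, so the two corollaries of the paper contradict each other at these values. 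What your argument actually proves --- and what the correct statement should be --- is the diameter version: if $d=\operatorname{diam}(T)$, then $T\supseteq P_{d+1}$ and hence $\lc\sqrt{2(d+1)}-\frac12\rc\le\thc(T)\le\lc d/2\rc+1$; this coincides with the corollary precisely when $d=2r$, i.e., when $T$ is vertex-centered. In short, your proof is as complete as the statement allows, and the residual gap you identified is an error in the corollary rather than in your argument.
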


A \emph{full binary tree of height $h$}, denoted by $T_B(h)$, is a tree with {\em root} vertex $z\in V(T)$ such that $\deg(z)=2$, $\dist(v,z) \leq h$ for all $ v\in V(T)$, and  $\displaystyle{ \deg(v)=\begin{cases} 3 &\dist(v, z) <h \\ 1 &\dist(v,z) =h. \end{cases}  }$ for all  $v\neq z$. 

\begin{cor}\label{binary}
For the full binary tree of height $h$, $\thc(T_B(h))=h+1$.
\end{cor}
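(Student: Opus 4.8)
The plan is to use the identity $\thc(T)=\min_k\!\big(k+\rad_k(T)\big)$ for trees from Remark~\ref{kradth}, reducing the problem to estimating the $k$-center radii of $T_B(h)$, and then to control those radii by a counting argument on the $2^h$ leaves of $T_B(h)$.

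First I would record the structural facts. The tree $T_B(h)$ has $2^{h+1}-1$ vertices, all of its $2^h$ leaves are at distance $h$ from the root $z$, and $\rad(T_B(h))=h$ with $z$ the unique center: every vertex is within distance $h$ of $z$, while for a vertex $v\neq z$, writing $j=\dist(v,z)\ge 1$ and choosing a leaf $\ell$ in the child-subtree of $z$ that does not contain $v$, we have $\dist(v,\ell)=j+h>h$. Hence by Corollary~\ref{treeBound1} (equivalently, place a single cop at $z$ and apply Remark~\ref{kradth} with $k=1$), $\thc(T_B(h))\le \rad(T_B(h))+1=h+1$.

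For the matching lower bound it suffices, by Remark~\ref{kradth}, to prove $k+\rad_k(T_B(h))\ge h+1$ for every $k\ge 1$. If $\rad_k(T_B(h))\ge h$ this is immediate since $k\ge 1$. Otherwise set $r:=\rad_k(T_B(h))<h$ and fix $S\subseteq V(T_B(h))$ with $|S|=k$ and $\dist(v,S)\le r$ for all $v$; in particular each of the $2^h$ leaves is within distance $r$ of some vertex of $S$. The key claim is that a ball of radius $r<h$ in $T_B(h)$ contains at most $2^r$ leaves. To see this, let $s$ be the center of the ball, $j=\dist(s,z)$, and let $\ell$ be any leaf with $\dist(s,\ell)\le r$. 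Let $m$ be the vertex of the (unique) $s$--$\ell$ path closest to $z$; it is a common ancestor of $s$ and $\ell$ in the rooting at $z$, and writing $i^\ast=\dist(m,z)\le j$ we get $\dist(s,\ell)=(j-i^\ast)+(h-i^\ast)\le r$, which forces $i^\ast\ge h-r$ (in particular $j\ge h-r$, for otherwise no leaf lies in the ball and the claim is trivial). Consequently $\ell$ is a descendant of the ancestor $u$ of $s$ at distance $h-r$ from $z$; since the subtree rooted at $u$ is a full binary tree of height $r$, it has exactly $2^r$ leaves, proving the claim. Now covering all $2^h$ leaves by the $k$ balls of radius $r$ centered at the vertices of $S$ forces $k\cdot 2^r\ge 2^h$, i.e.\ $k\ge 2^{h-r}$. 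Since $h-r\ge 1$ and $2^m\ge m+1$ for every integer $m\ge 1$, we conclude $k\ge 2^{h-r}\ge (h-r)+1$, hence $k+r\ge h+1$. Thus $\min_k\!\big(k+\rad_k(T_B(h))\big)\ge h+1$, and combining with the upper bound yields $\thc(T_B(h))=h+1$.

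The step I expect to require the most care is the leaf-counting claim — showing that a ball of radius $r<h$ cannot reach leaves spread across more than one subtree rooted at distance $h-r$ from $z$. The essential point is the inequality $i^\ast\ge h-r$ for the turn-around vertex of an $s$--$\ell$ path of length at most $r$; once that is in hand, identifying the leaves of the ball with the $2^r$ leaves of a single depth-$r$ subtree, and the resulting arithmetic, are routine.
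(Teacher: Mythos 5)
Your proof is correct, but it takes a genuinely different route from the paper. The paper obtains this corollary with no argument of its own: it invokes Theorem~\ref{tree-ptc-eq-ptp} (that $\thc(T;S)=\thp(T;S)$ and $\thc(T)=\thp(T)$ for every tree $T$) and then imports the value $\thp(T_B(h))=h+1$ from Section~4 of the cited PSD-throttling paper, so the content of the corollary lives entirely in that external reference. You instead give a self-contained argument inside the Cops-and-Robbers framework: the upper bound by placing one cop at the center ($k=1$, $\rad_1=h$), and the lower bound by combining Remark~\ref{kradth} with a ball-covering estimate on the $2^h$ leaves. Your key claim --- that a ball of radius $r<h$ contains at most $2^r$ leaves --- is correctly justified: from $\dist(s,\ell)=(j-i^\ast)+(h-i^\ast)\le r$ and the observation that $j\ge h-r$ whenever the ball contains any leaf, one gets $i^\ast\ge\tfrac{j+h-r}{2}\ge h-r$, so all leaves of the ball lie in the depth-$r$ subtree rooted at the ancestor of $s$ at depth $h-r$; then $k\ge 2^{h-r}\ge h-r+1$ gives $k+r\ge h+1$. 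What your approach buys is independence from the external PSD-throttling computation (and indeed from the PSD machinery altogether, since Remark~\ref{kradth} rests only on the $k$-radius characterization of capture time on trees); what the paper's approach buys is brevity and a uniform treatment of several tree corollaries at once via the equality $\thc=\thp$.
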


Observe that if we denote the order of $T_B(h)$ by $n$, then $\thc(T_B(h))=h+1=\Theta(\log n)$.  It is shown in \cite{PSDthrottle} that  $\thp(G)=\Omega(\log n)$ for a graph $G$ of order $n$.  By contrast,  $\thc(P_n)=\Theta(\sqrt n)$, and 
we now show that  $\thc(T)=O(\sqrt n)$ for all  trees $T$ of order $n$.  This is established  by providing an algorithm to construct a set $S$ with $|S|\le \lf \sqrt n\rf$ and $\ptp(S)=\lf\sqrt n\rf$.\vspace{8pt}

\begin{algorithm2e}[H]{ 
\caption{For a tree $T$ of order $n$,  return PSD zero forcing set  $S$ with $|S|\le \lf \sqrt n\rf$ and $\ptp(T;S)\le \lf \sqrt n\rf$.\label{alg2sqrtnT}}
\vspace{1mm}
\textbf{given} $T=(V,E)$\;
 $n \gets |V|$\;
 $t\gets \lf \sqrt n\rf$\;
 $S \gets \emptyset$\;
 $T' \gets T$\;
 $c\gets$ a center vertex of $T'$\;
 $r\gets \rad(T')$\;
\While{$r> t$}{
 Find a vertex $v$ such that $\dist_{T'}(v,c)=r$\; 
 Find the vertex $u$ on the path from $v$ to $c$ such that $\dist_{T'}(v,u)=t$\; 
$S \gets S \cup \{u\}$\;
 $T' \gets$  the component of $T'-u$ that contains $c$\;
 $c\gets $  a center vertex of $T'$\;
 $r\gets \rad(T')$\;
}

$S \gets S \cup \{c\}$\;
\Return $S$.\vspace{1mm}

}\end{algorithm2e}

\begin{thm}\label{thcT2sqrtn} Algorithm $\ref{alg2sqrtnT}$ finds a set $S$ such that $|S|\le \lf\sqrt n\rf$, $\capt(T;S)=\ptp(T;S)\le \lf\sqrt n\rf$, and $\thc(T;S)= \thp(T;S)\le 2 \lf\sqrt n\rf$.  Thus, $\thc(T)=\thp(T)\le 2\lf\sqrt n\rf$.
\end{thm}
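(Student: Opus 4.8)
The plan is to verify that Algorithm \ref{alg2sqrtnT} terminates, then bound $|S|$ and $\ptp(T;S)$ separately; the last conclusion $\thc(T)=\thp(T)\le 2\lf\sqrt n\rf$ then follows from Theorem \ref{tree-ptc-eq-ptp} together with $\thc(T;S)\le\thp(T;S)=|S|+\ptp(T;S)$. The key quantitative observation is that each iteration of the \textbf{while} loop removes a \emph{subtree of order at least $t+1$} from $T'$: when we select $v$ with $\dist_{T'}(v,c)=r$ and then the vertex $u$ on the $v$--$c$ path with $\dist_{T'}(v,u)=t$, deleting $u$ splits $T'$ into components, and the component containing $v$ together with $u$ itself forms a path of length $t$, hence contains at least $t+1$ vertices. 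Since $T'$ is updated to the component containing $c$ (which excludes $u$ and everything on $v$'s side), each pass permanently discards at least $t+1$ vertices. As $n = t^2 + (\text{remainder})$ with remainder $< 2t+1$, there can be at most $\lf n/(t+1)\rf < t$ iterations before $r \le t$; adding the final $c$ gives $|S| \le t = \lf\sqrt n\rf$, and in fact the count shows there is room to spare.

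The second step is the propagation bound. I claim that after the loop, $S$ is a PSD zero forcing set with $\ptp(T;S)\le t$. The intuition: in a tree, every vertex of degree $\ge 2$ is a cut vertex, so (as noted in the proof of Theorem \ref{tree-ptc-eq-ptp}) for any $S$ we have $\ptp(T;S)=\max_{v\in V(T)}\dist(v,S)$, and it suffices to show every vertex of $T$ is within distance $t$ of $S$. The center $c$ of the final $T'$ has $\rad(T')\le t$, so every vertex remaining in $T'$ at termination is within $t$ of $c\in S$. For a vertex $w$ that was discarded in some iteration — i.e. $w$ lies in a component of $T'-u$ not containing $c$ — the choice of $v$ as a farthest vertex from $c$ at distance $r$, and of $u$ at distance exactly $t$ from $v$ along the $v$--$c$ path, guarantees that the "stub" hanging off $u$ away from $c$ has radius at most $t$ measured from $u$: any $w$ on $v$'s side of $u$ satisfies $\dist_{T'}(w,u)\le t$, since otherwise the $w$--$c$ path (through $u$) would have length $\dist_{T'}(w,u)+\dist_{T'}(u,c) > t + \dist_{T'}(u,c) \ge \dist_{T'}(v,u)+\dist_{T'}(u,c) = r$, contradicting maximality of $r=\dist_{T'}(v,c)$. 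Hence every discarded $w$ is within $t$ of $u\in S$. Combining, $\max_{v\in V(T)}\dist(v,S)\le t$, so $\ptp(T;S)\le t = \lf\sqrt n\rf$.

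Putting these together: $|S|\le\lf\sqrt n\rf$ and $\ptp(T;S)\le\lf\sqrt n\rf$, so by Theorem \ref{ptc-le-ptp} (or directly Theorem \ref{tree-ptc-eq-ptp}) $S$ is a capture set with $\capt(T;S)=\ptp(T;S)\le\lf\sqrt n\rf$, giving $\thc(T;S)=\thp(T;S)=|S|+\ptp(T;S)\le 2\lf\sqrt n\rf$, and thus $\thc(T)=\thp(T)\le 2\lf\sqrt n\rf$ since $\thc(T)\le\thc(T;S)$ and $\thp(T)\le\thp(T;S)$ (with equality of the two throttling numbers by Theorem \ref{tree-ptc-eq-ptp}). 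The main obstacle I anticipate is the careful bookkeeping in the distance argument of the second step — one must be precise about which subgraph distances are being compared ($\dist_{T'}$ at the time of an iteration versus $\dist_T$ in the original tree) and check that deleting $u$ does not increase the distance from a surviving vertex to $c$, which holds because $u$ is on the $c$-side of everything it cuts off. A secondary subtlety is confirming that the distances used for the already-processed stubs remain valid (distances within the stub equal distances in $T$), but this is automatic since each stub is a connected component that is never modified after removal.
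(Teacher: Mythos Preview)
Your approach matches the paper's, and the propagation-time half is correct (indeed more carefully justified than the paper, which simply asserts $\dist_{T'}(u,w)\le t$ without spelling out the maximality-of-$r$ argument you give). The gap is in your iteration count. The inequality $\lfloor n/(t+1)\rfloor < t$ is false in general: for example $n=15$, $t=\lfloor\sqrt{15}\rfloor=3$ gives $\lfloor 15/4\rfloor = 3 = t$, not $<t$. So your argument, as written, only yields at most $t$ passes of the loop and hence $|S|\le t+1$.

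The paper closes this with one additional observation you omit: after $t-1$ iterations, $|V(T')|\le n-(t-1)(t+1)=n-t^2+1$, and since $t=\lfloor\sqrt n\rfloor$ forces $n<(t+1)^2=t^2+2t+1$, we get $|V(T')|\le 2t+1$. A tree on at most $2t+1$ vertices has diameter at most $2t$ and hence radius at most $t$, so the while-condition $r>t$ fails and the loop halts after at most $t-1$ passes, giving $|S|\le (t-1)+1=t$. What your argument is missing is precisely this link from the \emph{order} of $T'$ to the \emph{radius} of $T'$; a raw vertex count alone cannot certify that the loop terminates soon enough.
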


\bpf   We show that $|S|\le \lf\sqrt n\rf$ and $\ptp(T;S)\le \lf\sqrt n\rf$.  These results, together with Theorem \ref{tree-ptc-eq-ptp}, imply the remaining statements.  As in the algorithm, let $t=\lf\sqrt n\rf$.  

First  we show  that $|S| \le t$.   
Each iteration of the while-loop adds one vertex $u$ to $S$.  If at most $t-2$ iterations of the while-loop are executed, then $|S|\le t-2+1=t-1$.  So assume at least $t-1$ iterations of the while-loop are executed.  Each iteration of the while-loop  removes at least $t+1$ vertices from $T'$, so after $t-1$ iterations,  $|V(T')|\le n-(t-1)(t+1)=n-t^2+1$.  Since $t=\lf\sqrt n\rf$, $n< (t+1)^2=t^2+2t+1$.  So $|V(T')|\le 2t+1$, which implies $\rad(T')\le t$ and the 
while-loop 
terminates.  Thus, $|S|=t-1+1=t$.

To complete the proof,  we show  that $\ptp(T;S) \le t$.  
Consider a  vertex $u$  added to $S$  in the while-loop,  and let $w$ be a vertex 
of $T'$ 
outside the component of $T'-u$ containing $c$ (a center vertex of $T'$).  Then  $\dist_{T'}(u,w)\le t $, so $u$ can force $w$ in at most $t$ time-steps.  When  vertex $c$ is  added to $S$  in the last step before returning, $\dist_{T'}(u,x)\le t $ for every  $x\in V(T')$, so $c$ can force $w$ in at most $t$ time-steps.  
\epf

 We do not have examples showing the factor of 2 in the  bound in Theorem \ref{thcT2sqrtn} is  tight. 
However, by examining the path, we know that the scalar multiple of $\sqrt n$ in any upper  bound for $\thc(T)$  must be at least $\sqrt 2$  (cf. Corollary \ref{thmThrP}).     The next example shows that there exists a tree having a larger cop-throttling number  than the path with the same order.

\begin{ex}  Let $T$ be the tree shown in Figure \ref{fig:pathnotbiggesttree}. 
Observe that $\rad_1(T)=4, \rad_2(T)=3, \rad_3(T)=2$, and $\rad_4(T)=1$.  Since $\thc(T)=\min_k (k+\rad_k(T))$,  \[\thc(T)= 5>4=\thc(P_{10}).\]
\end{ex}
\begin{figure}[h!] \begin{center}
\scalebox{.4}{\includegraphics{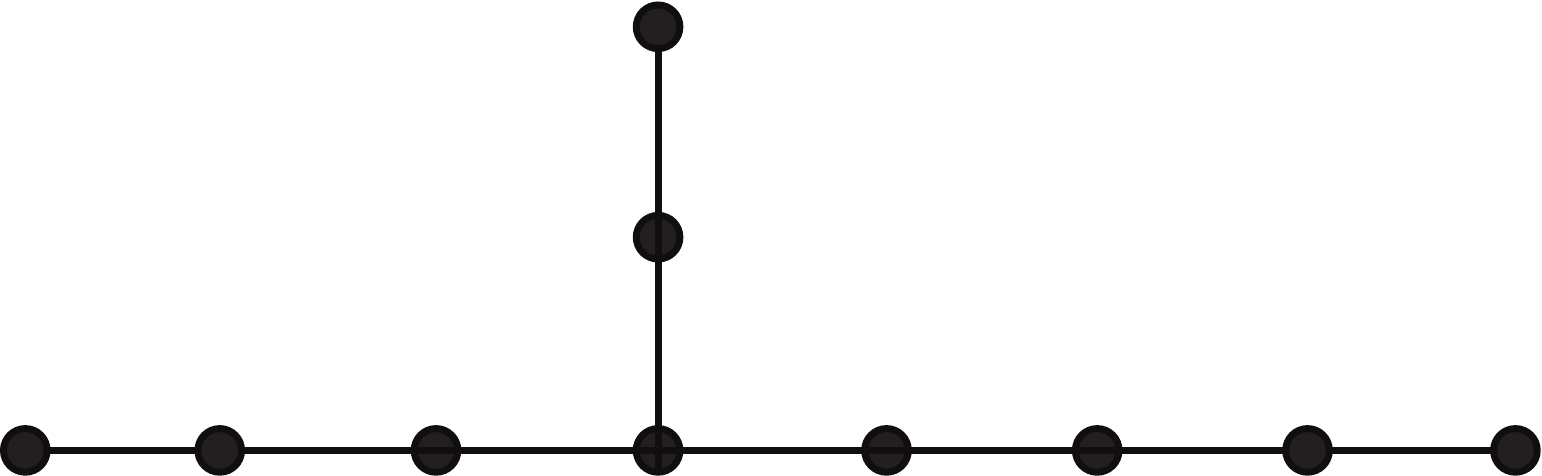}}
\caption{\label{fig:pathnotbiggesttree} A tree of order $10$ for which $\thc(T)=5>4=\thc(P_{10})$ \vspace{-10pt} }
\end{center}
\end{figure}

Other similar examples can be constructed, although we have not found a family of trees  whose cop-throttling numbers can be arbitrarily larger than the cop-throttling numbers of paths with the same order.


\subsection{Throttling for Cops and Robbers on unicyclic graphs}

This section   applies results on PSD throttling to determine the cop-throttling number of cycles and  to establish $\sqrt 6\sqrt n$ as an upper bound for the cop-throttling and PSD throttling numbers of unicyclic graphs.
First we consider cycles.  

\begin{rem}\label{rem:cycle} Note that $c(C_n)=2$ for $n\ge 4$, 
and any placement set $S$ of cops with $|S|\ge 2$ allows PSD forcing following the movement of cops as if the robber were in each segment.  Thus $\capt(C_n;S)=\ptp(C_n;S)$,   $\thc(C_n;S)=\thp(C_n;S)$, and $\thc (C_n)=\thp (C_n)$.  Furthermore, $\thc(C_n)=\min_k(k+\rad_k(C_n))$.  
\end{rem}
The next result now follows from \cite[Theorem 3.3]{PSDthrottle}.

\begin{thm}  \label{thmThrC}
Let $n\geq 4$. 
Then \[\thc (C_n)=\thp (C_n)=  \lc \sqrt{2n}-\frac{1}{2}   \rc.
\]
\end{thm}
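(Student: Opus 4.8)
The plan is to reduce the statement entirely to the corresponding PSD throttling result. By Remark \ref{rem:cycle}, for $n \ge 4$ we have $c(C_n) = 2$, and any multiset $S$ of cops with $|S| \ge 2$ yields a PSD zero forcing process that mirrors the cops' pursuit: each segment of $C_n - S$ is a path with two blue endpoints (or one, but the hypothesis $|S|\ge 2$ guarantees the relevant covering structure), so PSD forcing along each segment proceeds in lockstep with cops chasing the robber's position in whichever segment it occupies. This gives $\capt(C_n; S) = \ptp(C_n; S)$ for every such $S$, hence $\thc(C_n; S) = \thp(C_n; S)$, and taking the minimum over all placement sets yields $\thc(C_n) = \thp(C_n)$. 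Moreover, since $\capt_k(C_n)$ is infinite for $k < 2$, the minimization defining $\thc(C_n)$ effectively ranges over $k \ge 2$, which matches the regime in which the equality with $\thp$ holds; the remark also records $\thc(C_n) = \min_k(k + \rad_k(C_n))$, which can serve as a consistency check.

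Having established $\thc(C_n) = \thp(C_n)$, I would invoke \cite[Theorem 3.3]{PSDthrottle}, which computes $\thp(C_n) = \lceil \sqrt{2n} - \tfrac12 \rceil$ for $n \ge 4$. Substituting this value completes the proof. Essentially all the work is packaged in Remark \ref{rem:cycle} and the cited PSD throttling theorem, so the argument is a short assembly:
\[
\thc(C_n) \;=\; \thp(C_n) \;=\; \left\lceil \sqrt{2n} - \frac{1}{2} \right\rceil.
\]

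The only point requiring genuine care — and the step I would expect to be the main obstacle if one were to write it out in full rather than citing Remark \ref{rem:cycle} — is justifying the equality $\capt(C_n; S) = \ptp(C_n; S)$ for an arbitrary cop placement $S$ with $|S| \ge 2$. The inequality $\capt(C_n; S) \le \ptp(C_n; S)$ is a special case of Theorem \ref{ptc-le-ptp}. The reverse inequality requires exhibiting a robber strategy forcing the cops to spend at least $\ptp(C_n;S)$ rounds: the robber picks the segment of $C_n - S$ whose "PSD propagation depth" is largest, starts at (or retreats toward) the vertex realizing that depth, and on a cycle the only cops that can threaten that segment are the two on its boundary, each of which can advance at most one vertex per round into the segment. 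This matches the PSD forcing rate exactly, giving $\capt(C_n;S) \ge \ptp(C_n;S)$. Since the paper states all of this in Remark \ref{rem:cycle}, the proof proper need only cite that remark together with \cite[Theorem 3.3]{PSDthrottle}.
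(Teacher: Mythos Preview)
Your proposal is correct and matches the paper's own argument essentially line for line: the paper states the equality $\thc(C_n)=\thp(C_n)$ in Remark \ref{rem:cycle} and then simply records that Theorem \ref{thmThrC} ``follows from \cite[Theorem 3.3]{PSDthrottle}.'' Your additional paragraph unpacking the reverse inequality $\capt(C_n;S)\ge\ptp(C_n;S)$ is more detail than the paper provides, but it is sound and in the same spirit.
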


The equality of cop-throttling and PSD-throttling does not hold for all unicyclic graphs, as the next example shows.

\begin{ex}  Let $G$ be the graph shown in Figure \ref{fig:unicyclicex}.  Since $G$ does not have a corner, 
we must use at least two cops (blue vertices) to catch the robber (force the graph).  
Since $\gamma(G)=4$, we cannot achieve four or less as the value of throttling number (for Cops and Robbers or PSD zero forcing) with propagation time equal to one.  These observations imply  $\thc(G)\ge 4$.

    
To complete the argument that $\thc(G)=4$, we show that $\capt(G;\{8,9\})=2$. Note  that every vertex is within distance 2 of a cop.  If the robber is on the cycle,  the cop on vertex 8 can immediately move in the direction of the robber since the cop on vertex 9 is guarding vertex 4.  Thus, $\capt(G;\{8,9\})=2$ and  $\thc(G)=4$.  

Next we show that $\thp(G)=5$.  Since $\ptp(G;\{8,9\})=3$, $\thp(G)\le 5$. Suppose $S$ is a PSD zero forcing set with $|S|=2$. By previous remarks, to show that $\thp(G)=5$, it suffices to show that $\ptp(G;S)\ge 3$.  In order for vertex 11 to be blue after time-step 2, necessarily one of the vertices  9, 10, 11 must be in $S$.  No forcing can take place on the cycle until there are at least two blue vertices on the cycle, which cannot happen until after time-step 1, after which it takes at least two more time-steps to force the cycle.  Thus, $\ptp(G;S)\ge 3$. 

\begin{figure}[h!] \begin{center}
\scalebox{.45}{\includegraphics{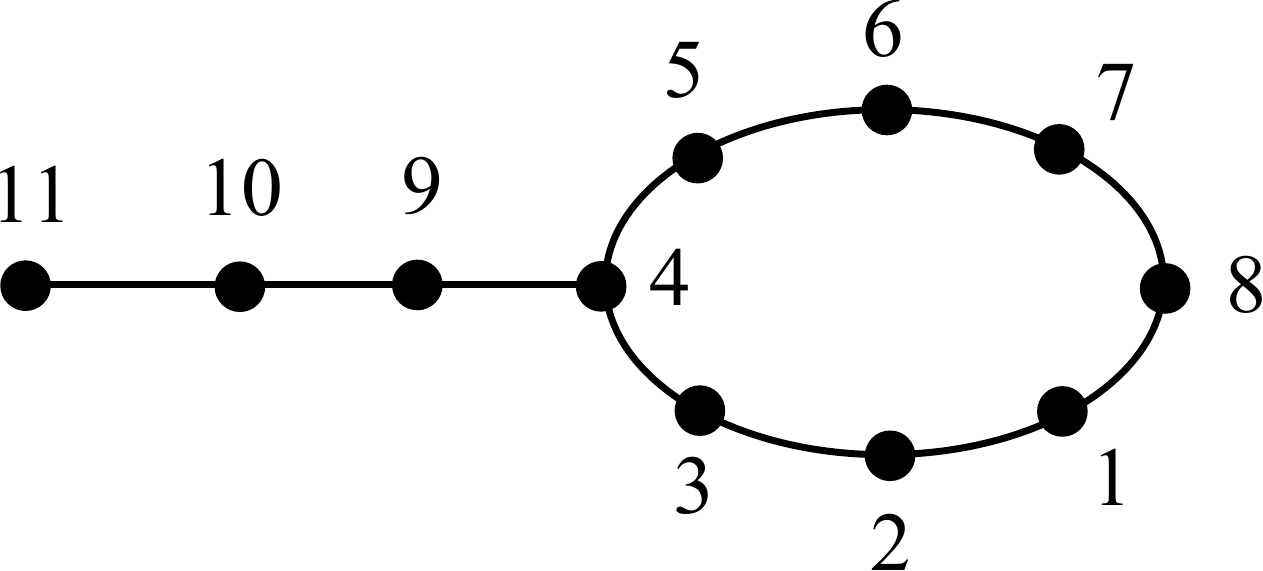}}
\caption{\label{fig:unicyclicex}A unicyclic graph for which $\thc(G)=4<5=\thp(G)$ \vspace{-10pt}}
\end{center}
\end{figure}
\end{ex}

We can use Theorem \ref{thcT2sqrtn} and Theorem \ref{thmThrC} to obtain an $O(\sqrt n)$ upper bound on  the PSD-throttling number of a unicyclic graph. This is established in Theorem \ref{thcU3sqrtn} below, after a modification of Algorithm \ref{alg2sqrtnT}.

\begin{rem}\label{thcT2sqrtn-mod} We can modify Algorithm \ref{alg2sqrtnT} slightly in the case that certain vertices are known to be blue (this situation arises in the case of a  forest  attached to  a cycle, where we use a strategy to first force the cycle).   Let $X\subseteq V(T)$ and assume that these vertices are blue before applying the algorithm. We then modify  Algorithm \ref{alg2sqrtnT} by measuring distance  not counting a vertex in $X$, and using $n'=n-|X|$.  This is illustrated in the next example.
 The reason we can ignore a vertex in $X$ in counting distances  is that when forcing is started in the tree, that vertex will start forcing immediately, so distances will be shorter. 
 Modified Algorithm \ref{alg2sqrtnT}  finds a set $S\supseteq X$ such that   $|S|\le |X|+\sqrt {n'}$ and $\ptp(T;S)\le\sqrt {n'}$.  We observe that adding fixed blue vertices for no reason is not desirable as it raises the number of blue vertices at least as much as it reduces propagation time, but it can improve propagation time slightly when the vertices are known to be blue anyway.  This situation in which it is known that  some vertices must be blue has been studied for both standard zero forcing and power domination in \cite{restrict}. \end{rem}

\begin{ex}
Consider $P_{9}$ with vertices $1,\dots,9$ in path order.  Algorithm \ref{alg2sqrtnT} sets $c=5$ and $t=3$ initially.  In the first iteration of the while-loop, it can select  $v=1$, implying $u=4$.  Deleting $u=4$ gives that $T'$ is the path on vertices $5,\dots,9$, so the while-loop terminates.  Then, vertex 7 is added to $S$, so $S=\{4,7\}$ and $\ptp(P_9;S)=3=\sqrt 9$.  Now assume vertex 4 is known to be blue (i.e., $X=\{4\}$).  Modified Algorithm \ref{alg2sqrtnT}   described in Remark \ref{thcT2sqrtn-mod} sets $n'=9-1=8$, $t=\lf\sqrt 8\rf = 2$, and $c=5$ (or $c=6$)  initially.  It can select  $v=1$ in the first iteration of the while-loop, implying $u=3$.  Deleting $u=3$ yields $T'$ to be the path on vertices $4,\dots,9$ but vertex $4$ is ignored, so  we have $r=2=t$ and the while-loop terminates.  Then, vertex 7 is added to $S$, so $S=\{3,4,7\}$ and $\ptp(P_9;S)=2=\lf\sqrt 8\rf$.
\end{ex}
 
\begin{thm}\label{thcU3sqrtn} Let $G$ be a unicyclic graph of order $n\ge 3$. Then 
\[\thc(G)\le \thp(G)\le\sqrt 6\sqrt {n}+\frac 1 2.\]
\end{thm}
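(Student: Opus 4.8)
The plan is to reduce the unicyclic case to the tree case via a clean decomposition: the cycle plus forest structure. Write $G$ as a cycle $C_g$ (the unique cycle, of length $g$) together with trees hanging off the cycle vertices. The strategy has two phases — first force (capture on) the cycle itself, then propagate out into the attached forest using the modified Algorithm \ref{alg2sqrtnT} from Remark \ref{thcT2sqrtn-mod}. The key quantities to balance are: the number of blue vertices used to handle the cycle, the number used to handle the forest, and the resulting propagation time. Since $\thc(G)\le\thp(G)$ by Theorem \ref{ptc-le-ptp}, it suffices to exhibit a PSD zero forcing set $S$ with $|S|+\ptp(G;S)\le \sqrt 6\sqrt n+\tfrac12$.

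\textbf{Key steps.} First I would pick a parameter $a$ (to be optimized) and place $a$ blue vertices on the cycle, roughly equally spaced, so that after about $\tfrac{g}{2a}$ time-steps the entire cycle is blue, by the cycle throttling analysis behind Theorem \ref{thmThrC}. Concretely, with $a$ cops on $C_g$ one needs $\lceil g/(2a)\rceil$-ish rounds to force the cycle, analogous to the $C_n$ computation. Second, once the cycle is blue, each hanging tree $T_i$ (rooted at its cycle vertex $c_i$, which is now blue) can be treated by Modified Algorithm \ref{alg2sqrtnT} with the root as a known-blue vertex: this contributes at most $\sqrt{n_i'}$ additional blue vertices and propagation time at most $\sqrt{n_i'}$, where $n_i'$ is the number of non-cycle vertices in $T_i$. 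Summing over the trees, the total extra blue vertices is at most $\sum_i \sqrt{n_i'}$ and — crucially — the forest phase runs in parallel across all trees, so the extra propagation time is $\max_i \sqrt{n_i'}$, both of which are $\le \sqrt{n-g}\le\sqrt n$ in the worst case (in fact one should distribute more carefully, but the crude bound $\sum_i\sqrt{n_i'}\le\sqrt{n'}$ when the trees are "chained" / the bound $\le$ something needs the right convexity estimate — see obstacle below). Putting it together, $|S|\le a + (\text{forest cost})$ and $\ptp(G;S)\le \lceil g/(2a)\rceil + (\text{forest time})$, and choosing $a\approx\sqrt{g/2}$ or, more globally, optimizing $a$ against $n$ yields a bound of the shape $\sqrt 6\sqrt n$. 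The constant $\sqrt 6$ suggests the optimization effectively splits the "budget" into three comparable pieces (cycle cops $\sim$ cycle time $\sim$ forest contribution), each of size $\sim\sqrt{n/?}$, with $\sqrt{1}+\sqrt{1}+\sqrt{1}$ type arithmetic under the constraint $a^2 + (\text{stuff})\lesssim n$ producing $\sqrt 6$; I would verify the exact bookkeeping by writing $|S|+\ptp \le a + \lceil g/(2a)\rceil + 2\sqrt{n-g+1}$ (roughly) and then maximize the right side over $g$ and minimize over $a$.

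\textbf{Main obstacle.} The delicate point is the forest phase: a single tree attached at one cycle vertex can have up to $n-g$ vertices, and Modified Algorithm \ref{alg2sqrtnT} then costs $\approx 2\sqrt{n-g}$ in throttling, which already pushes toward $2\sqrt n$; combined naively with the cycle cost this overshoots $\sqrt 6\sqrt n\approx 2.449\sqrt n$ only if we are not careful, and could even stay under it, but the bookkeeping with the $+\tfrac12$ and the ceilings is where errors creep in. The real subtlety is that the cycle-forcing time and the forest-forcing time do \emph{not} simply add in the worst case — once enough of the cycle is blue, forcing can start spilling into the trees before the whole cycle is done — so a clean worst-case upper bound that is simultaneously tight enough to give $\sqrt 6$ requires choosing the cycle cop placement and the tree-algorithm parameter $t$ jointly rather than sequentially. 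I expect the argument to place $\lceil\sqrt{n/6}\rceil$-ish cops on the cycle (or enough to make the cycle-forcing time $\le \sqrt{?}$), apply the modified algorithm to the forest with $t=\lfloor\sqrt{n'}\rfloor$ adjusted, and then do a case analysis on whether $g$ is large (cycle dominates) or small (forest dominates), bounding each regime separately and checking both stay below $\sqrt 6\sqrt n+\tfrac12$. The clique-sum corollary (Corollary \ref{cor:cliquesum}) with $m=1$ at the cycle vertices could also serve as a cleaner packaging of "handle each tree independently," but it loses the parallelism in propagation time, so I would only use it if the direct algorithmic argument gets unwieldy.
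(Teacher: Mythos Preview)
Your overall architecture---force the cycle first using an optimal cycle set, then use the modified Algorithm \ref{alg2sqrtnT} on the attached forest with the cycle playing the role of the known-blue vertex---is exactly the paper's approach. But there is a genuine gap in your forest phase, and you have misidentified where the difficulty lies.

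The problem is your handling of multiple hanging trees. You apply the modified algorithm to each $T_i$ separately, obtaining at most $\sqrt{n_i'}$ extra blue vertices per tree, and then assert that the total $\sum_i\sqrt{n_i'}$ is at most $\sqrt{n-g}$. This inequality goes the wrong way: by concavity of the square root, $\sum_i\sqrt{n_i'}\ge\sqrt{\sum_i n_i'}=\sqrt{n-g}$, with strict inequality whenever more than one tree is nontrivial. In the extreme case of $m$ trees each of size $(n-g)/m$, your blue-vertex count is $\sqrt{m(n-g)}$, which can be arbitrarily larger than $\sqrt{n-g}$. So treating the trees separately does not give the bound.

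The paper's fix is simple and you should adopt it: contract the entire cycle $C_k$ to a single vertex $x$, producing \emph{one} tree $T$ of order $n-k+1$. Apply the modified algorithm once to $T$ with $X=\{x\}$; this yields a set $S_T\ni x$ with $|S_T|\le 1+\sqrt{n-k}$ and $\ptp(T;S_T)\le\sqrt{n-k}$. Back in $G$, the off-cycle vertices of $S_T$ together with an optimal PSD set $S_C$ for $C_k$ form your zero forcing set; once $C_k$ is blue the remaining propagation mimics that of $T$. The resulting bound is
\[
\thp(G)\le |S_C|+\ptp(C_k;S_C)+|S_T|-1+\ptp(T;S_T)\le \Bigl\lceil\sqrt{2k}-\tfrac12\Bigr\rceil+2\bigl\lfloor\sqrt{n-k}\bigr\rfloor\le \sqrt{2k}+\tfrac12+2\sqrt{n-k}.
\]
Your ``main obstacle''---that cycle time and forest time might not add because forcing leaks into the trees early---is not an obstacle at all: the paper simply adds the two phases sequentially, and that crude estimate is already good enough. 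Likewise no case analysis on the size of $g$ is needed; just maximize $f(k)=\sqrt{2k}+2\sqrt{n-k}$ over $1\le k\le n$ by calculus. The derivative vanishes at $k=n/3$, giving $f(n/3)=\sqrt{6n}$, which is where the constant $\sqrt 6$ comes from.
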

\bpf 
Let $C_k$ denote the one cycle of $G$ (with $k$ being its order).  Contract the edges of $C_k$ to obtain a tree $T$ of order $n-k+1$; call the contracted vertex $x$.  First we describe a strategy for PSD zero forcing.  Choose a set $S_C\subseteq V(C_k)$  so that $\thp(C_k;S_C)=\thp(C_k)$.  We apply  Modified Algorithm \ref{alg2sqrtnT}  described in Remark \ref{thcT2sqrtn-mod} with the assumption that $X=\{x\}$.  We  choose a set $S_T\subseteq V(T)$  such that $x\in S_T$, $|S_T|\le 1+\sqrt{n-k}$ and $\ptp(T;S_T)\le \sqrt{n-k}$.   Then by forcing only on $C_k$, we can color $C_k$ blue such that $|S_C|+\ptp(C_k;S_C)=\thp(C_k)$. At this point, begin forcing the rest of $G$ using all currently blue vertices, including $S_T\setminus\{x\}$.    The remaining vertices can be colored blue in at most $\ptp(T;S_T)$ time-steps.  Thus,
\bea \thc(G)\le \thp(G)&\le& |S_C|+|S_T|-1 + \ptp(C_k;S_C)+\ptp(T;S_T)\\
&=&\thp(C_k)  +\lf \sqrt{n-k}   \rf+\lf \sqrt{n-k}   \rf\le  \lc \sqrt{2k}-\frac{1}{2}   \rc+ 2\lf \sqrt{n-k}   \rf\\
&\le & \sqrt{2k}+\frac{1}{2}   + 2 \sqrt{n-k}.\eea

To complete the proof, we show that $\sqrt{2k}+\frac{1}{2}   + 2 \sqrt{n-k}\le \sqrt 6\sqrt {n}+\frac 1 2$ for $1\le k\le n$.  The function $f(k)=\sqrt{2k}+\frac{1}{2}   + 2 \sqrt{n-k}$ is differentiable on $[1,n]$ so we can find the maximum by evaluating $f(1), f(n)$, and $f(k_o)$ for $k_o$ such that $f'(k_o)=0$.  
Since $f'(k)= \frac{1}{\sqrt{2} \sqrt{k}}-\frac{1}{\sqrt{n-k}}$, $k_o=\frac n 3$ and $f(\frac n 3) = \sqrt 6\sqrt n+\frac  1 2 $.
Observe that $f(1)=\sqrt{2}+\frac{1}{2}   + 2 \sqrt{n-1}\le \sqrt 6\sqrt {n}+\frac 1 2$ 
and $f(n)=\sqrt{2n}+\frac{1}{2}   \le \sqrt 6\sqrt {n}+\frac 1 2$.
\epf\vspace{-10pt}


\subsection{Girth and degree lower bounds for cop-throttling number}

In this section we present a lower bound on the cop-throttling number in terms of the girth of a graph. We also obtain a minimum degree lower bound from the known lower bound on cop number.  We begin with a technical lemma. 

\begin{lem}[Girth Lemma]\label{lem:girth} 
Let $G$ be a graph with girth $g\geq 3$ and let $C$ be a $g$-cycle of $G$.  Given vertex $v\notin C$, there exists a vertex $x\in V(C)$ such that $\dist(x,u)\leq \dist(v,u)$ for all $u\in C$.  Specifically, we can choose such an $x\in V(C)$ as follows: If $\dist(v,C)\ge  \lf \frac g 2\rf$, then choose any vertex $x\in V(C)$.  If $\dist(v,C)< \lf \frac g 2\rf$, then choose $y\in V(C)$ such that $\dist(v,y)=\dist(v,C)$ and choose $x\in V(C)$ such that $\dist(x,y)=\dist(v,y)$.   \end{lem}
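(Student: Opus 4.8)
The plan is to prove the Girth Lemma by a direct case analysis on $\dist(v,C)$, exhibiting the vertex $x$ explicitly as the statement suggests and verifying the distance inequality $\dist(x,u)\le\dist(v,u)$ for every $u\in V(C)$. The key structural fact I will exploit is that in a graph of girth $g$, a $g$-cycle $C$ is an \emph{isometric} subgraph: for $y,u\in V(C)$ we have $\dist_G(y,u)=\dist_C(y,u)=\min\{|y-u|,\,g-|y-u|\}$, where $|y-u|$ denotes the cyclic gap along $C$. (If some $G$-path between two cycle vertices were shorter than the shorter arc of $C$, concatenating it with that arc would produce a closed walk, hence a cycle, of length less than $g$, a contradiction.) I would state and justify this isometry observation first, since both cases rely on it.

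In the first case, $\dist(v,C)\ge\lf g/2\rf$, I would take $x$ to be any vertex of $C$ and argue that for every $u\in V(C)$, $\dist(x,u)\le\lf g/2\rf\le\dist(v,C)\le\dist(v,u)$; the first inequality is the diameter bound for $C_g$ coming from the isometry fact, and the last is immediate from the definition of $\dist(v,C)$. In the second case, $\dist(v,C)<\lf g/2\rf$, pick $y\in V(C)$ realizing $\dist(v,y)=\dist(v,C)=:d$ and pick $x\in V(C)$ with $\dist(x,y)=d$ (such $x$ exists because $d<\lf g/2\rf\le\rad(C_g)$, so some cycle vertex is at cyclic distance exactly $d$ from $y$). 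For an arbitrary $u\in V(C)$, the triangle inequality gives $\dist(v,u)\ge\dist(v,y)$ is not quite what is needed; instead I would argue $\dist(v,u)\ge \dist(y,u)-\dist(v,y)=\dist(y,u)-d$, and separately $\dist(v,u)\ge d$, and then show $\dist(x,u)\le\max\{d,\ \dist(y,u)-d\}$ using the cyclic geometry relating $x,y,u$ on $C$. Combining these yields $\dist(x,u)\le\dist(v,u)$.

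The step I expect to be the main obstacle is the last inequality in the second case: bounding $\dist(x,u)$ in terms of $\dist(y,u)$ and $d$ on the cycle. The cleanest route is to choose $x$ as the specific cycle vertex at cyclic distance $d$ from $y$ that lies on the arc from $y$ \emph{toward} $u$ whenever $\dist_C(y,u)\ge d$ (pushing $x$ "closer" to $u$), so that $\dist_C(x,u)=\dist_C(y,u)-d$ as long as this stays in $[0,g/2]$; when $\dist_C(y,u)<d$, instead $\dist_C(x,u)\le d$ holds because both $x$ and $u$ lie within cyclic distance $d$ of $y$ on a cycle where $2d<g$. I would organize this as two sub-cases according to whether $\dist(y,u)\ge d$ or $<d$, handle the arithmetic via the isometry formula, and note that the choice of which of the (at most two) vertices at cyclic distance $d$ from $y$ to call $x$ is what makes the bound go through — this subtlety is exactly why the lemma phrases the choice of $x$ the way it does, and I would flag it explicitly rather than leaving the direction of $x$ ambiguous.
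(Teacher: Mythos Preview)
Your second case has a genuine gap. The lemma requires a \emph{single} vertex $x$ that works for all $u\in V(C)$ simultaneously, but your ``cleanest route'' chooses $x$ on the arc from $y$ toward $u$, making $x$ depend on $u$. Once $x$ is fixed, the inequality $\dist_C(x,u)\le\max\{d,\dist_C(y,u)-d\}$ is simply false. For instance, take $G=C_8$ with a pendant path of length $2$ attached at vertex $y=0$, so $d=2$; fix $x=2$ and $u=6$. Then $\dist_C(x,u)=4$ while $\max\{d,\dist_C(y,u)-d\}=\max\{2,0\}=2$. Correspondingly, your lower bound $\dist(v,u)\ge\max\{d,\dist(y,u)-d\}$ (which uses only the triangle inequality through $y$ and $\dist(v,C)=d$) is too weak here: it gives $\dist(v,u)\ge 2$, but you need $\dist(v,u)\ge 4$ to cover $\dist(x,u)=4$. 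The actual value $\dist(v,u)=4$ in this example comes from structural information your bounds discard.

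What is missing is a second use of the girth hypothesis beyond the isometry of $C$. The paper does not bound $\dist(v,u)$ via the triangle inequality through $y$; instead it first treats those $w\in V(C)$ reached from $v$ by a shortest path meeting $C$ only at $w$, forms the closed walk consisting of a shortest $y$--$v$ path, a shortest $v$--$w$ path, and the $w$--$y$ arc of $C$ avoiding $x$, and compares its length to $g=\operatorname{len}(C)$. This comparison yields $\dist(v,w)\ge\dist(x,w)$ directly for such $w$, with the ``side'' of $x$ fixed once and for all. A general $u\in V(C)$ is then handled by routing through the first cycle vertex $w$ on a shortest $v$--$u$ path. Your isometry observation alone cannot substitute for this cycle-length comparison.
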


\bpf   Note that for any $u,u'\in C$, $\dist(u,u')\leq \lf g/2\rf$. So for  $v\notin C$ with $\dist(v,C)\geq \lf g/2\rf$, any vertex $x\in C$ has the property that $\dist(x,u)\leq \dist(v,u)$ for all $u\in C$. 

Now assume $\dist(v,C)< \lf \frac g 2\rf$ and choose $y\in V(C)$ such that $\dist(v,y)=\dist(v,C)$ and choose $x\in V(C)$ such that $\dist(x,y)=\dist(v,y)$.  By the choice of $y$, there is no other vertex of $C$ on any shortest path from $v$ to $y$.  Since the length of $C$ is the minimum length of a cycle in $G$, there is a shortest path from $u$ to $u'$ that lies entirely within $C$  for any $u,u'\in V(C)$. 

We first show that $\dist(x,w)\le \dist(v,w)$ for $w\in W$, where $W$ is the set of all $ w\in V(C)$ such that some shortest path from $v$ to $w$ contains no other vertex of $C$.     The result is clear for $w=y$, so assume $w\ne y$.  
If $w$ is on a shortest path from $x$ to $y$, then $\dist(x,w)<\dist(x,y)=\dist(v,y)\le \dist(v,w)$.  
So assume no shortest path from $x$ to $y$  contains $w$.  
There are two paths in $C$ from  $w$ to $y$; let $P$ denote the path that does not contain $x$.  Consider the cycle $C'$ formed by the union of a shortest path from $y$ to $v$, a shortest path from $v$ to $w$ that has no other vertices of $C$, 
and $P$.  Observe that $C$ itself can be viewed as  the union of a shortest path from $y$ to $x$, a path $Q$ from $x$ to $w$ that does not contain $y$, and $P$.  Since $\len(C')\ge \len(C)$,
$ \dist(y,v)+\dist(v,w)+\len(P)\ge \dist(y,x)+\len(Q)+\len(P)\ge \dist(y,x)+\dist(x,w)+\len(P)$, where $\len(G)$ denotes the length of a path or cycle $G$.  Since $\dist(y,x)=\dist(y,v)$, this reduces to 
$\dist(v,w)\ge\dist(x,w)$.

Now, consider an arbitrary $u\in V(C)$ and let $w$ be the first vertex in $C$ on a shortest path from $v$ to $u$.  Then, $\dist(v,u)=\dist(v,w)+\dist(w,u)\ge\dist(x,w)+\dist(w,u)\ge \dist(x,u)$.
\epf

Observe that the Girth Lemma could have been stated without the hypothesis that $v\not\in V(C)$, because $x=v$ works for  $v\in V(C)$.   Given a cycle $C$ with length equal to the girth of $G$  and  $v\in V(G)$, we use the notation $x(v)$ to denote a vertex $x$ with the property that $\dist(x,u)\le\dist(v,u)$ for all $u\in V(C)$.  We now use the Girth Lemma to prove the next theorem.
 
\begin{thm}\label{thm:girth}
Let $G$ be a graph with girth $g\geq 3$. Then, $\thc(G)\geq \lc \sqrt{2g}-\frac{1}{2}\rc$, and this bound is tight.
\end{thm}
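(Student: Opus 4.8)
The plan is to compare $G$ with one of its shortest cycles and reduce to the already‑known value $\thc(C_g)$ from Theorem~\ref{thmThrC}. Fix a $g$-cycle $C$ in $G$. Since $g$ is the girth, $C$ is an isometric subgraph of $G$, i.e., $\dist_G(u,u')=\dist_C(u,u')$ for all $u,u'\in V(C)$; this is precisely the fact already exploited inside the proof of Lemma~\ref{lem:girth} (a shortest $u$--$u'$ path lies in $C$). The crux is that the Girth Lemma, read through this isometry, says exactly that $C$ is ``as central as all of $G$'': for each $v\in V(G)$ the vertex $x(v)\in V(C)$ supplied by Lemma~\ref{lem:girth} satisfies $\dist_C(x(v),u)\le\dist_G(v,u)$ for every $u\in V(C)$.

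From this I would derive $\rad_k(C)\le\rad_k(G)$ for every $k\ge 1$. If $S\subseteq V(G)$ with $|S|=k$ attains $\rad_k(G)$, set $S':=\{x(s):s\in S\}\subseteq V(C)$ (a set of at most $k$ vertices, which we pad inside $V(C)$ to exactly $k$ vertices if necessary, not increasing its eccentricity); then for every $u\in V(C)$,
\[
\dist_C(u,S')\le\min_{s\in S}\dist_C\bigl(u,x(s)\bigr)\le\min_{s\in S}\dist_G(u,s)=\dist_G(u,S)\le\rad_k(G).
\]
Using Remark~\ref{rem:cycle} together with Remark~\ref{kradth} (which give $\thc(C)=\min_k(k+\rad_k(C))$ when $g\ge 4$; the case $g=3$, where $C=K_3$ and $\thc(K_3)=2=\lc\sqrt{6}-\tfrac12\rc$, is immediate), it follows that
\[
\thc(C)=\min_k\bigl(k+\rad_k(C)\bigr)\le\min_k\bigl(k+\rad_k(G)\bigr)\le\thc(G),
\]
and Theorem~\ref{thmThrC} identifies $\thc(C)=\lc\sqrt{2g}-\tfrac12\rc$, yielding the stated inequality. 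Tightness is then immediate: $G=C_g$ has girth $g$ and $\thc(C_g)=\lc\sqrt{2g}-\tfrac12\rc$.

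I expect the proof to be short, the only delicate points being bookkeeping: recording that a shortest cycle is isometric (so the $\dist_G$-inequality of the Girth Lemma becomes a $\dist_C$-inequality), and checking that the minima over $k$ line up — termwise $\rad_k(C)\le\rad_k(G)$ makes the comparison of the two minima automatic over any common range, and the natural range $c(C)\le k\le\gamma(C)$ suffices to compute $\thc(C)$. It is worth noting why I route through $k$-center radii rather than directly simulating a cop strategy on $G$ by its ``shadow'' on $C$: the map $v\mapsto x(v)$ need not be a graph homomorphism onto $C$ (indeed, a shortest cycle need not even be a retract of $G$), so a move-by-move shadow argument does not obviously go through, whereas the static $\rad_k$ comparison sidesteps this entirely.
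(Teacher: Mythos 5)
Your proof is correct, and it reaches the same reduction to the $g$-cycle as the paper but justifies it by a genuinely different mechanism. The paper argues dynamically: it fixes the robber's strategy of remaining on $C$ and uses the Girth Lemma to assert that a cop starting at $v\notin V(C)$ confers no advantage over one starting at $x(v)\in V(C)$, so the game effectively collapses to the game on $C_g$ and $\thc(G)\ge\thc(C_g)=\lc\sqrt{2g}-\tfrac12\rc$. You make the reduction static: the Girth Lemma, read through the isometry of a shortest cycle (a fact the paper's own proof of Lemma~\ref{lem:girth} records), gives the termwise inequality $\rad_k(C_g)\le\rad_k(G)$, and then Remarks~\ref{kradth} and~\ref{rem:cycle} yield $\thc(C_g)=\min_k\bigl(k+\rad_k(C_g)\bigr)\le\min_k\bigl(k+\rad_k(G)\bigr)\le\thc(G)$, with Theorem~\ref{thmThrC} supplying the value. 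What your route buys is rigor at the one delicate point: the paper's ``no advantage'' claim concerns only initial positions, and, as you correctly observe, $v\mapsto x(v)$ need not be a homomorphism (a shortest cycle need not be a retract), so a move-by-move shadow argument is not automatic; your version needs only the stationary-robber bound $\capt_k(G)\ge\rad_k(G)$ together with the fact that for cycles the $k$-radii determine $\thc$ exactly. What the paper's route buys is brevity and a strategy-based reading that does not depend on the target subgraph's throttling number being computable from radii. Your bookkeeping --- padding $S'$ to exactly $k$ vertices, the range of $k$ in the two minima, and the separate $g=3$ case --- is all sound.
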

\begin{proof}
Let $C$ be a $g$-cycle of $G$.
Suppose the robber's strategy is to start in a vertex of $C$ and to remain inside $C$ until  caught (and that this is known to the cops). We claim that for this strategy of the robber, $k$ cops and $p$ rounds %
 will be required to catch the robber, where $k+p=\thc(G[V(C)])=\thc(C_{g})=\lc \sqrt{2g}-1/2\rc$. %
  Indeed, no advantage is offered to the cops by the existence of vertices outside $C$, since  a cop who starts at a vertex $v$ outside $C$  could not reach a vertex  in $C$ any faster than a cop who starts  at the vertex $x(v)$ in $C$ whose existence is guaranteed by  the Girth Lemma. Since $k$ cops can catch the robber in $p$ rounds in $G[V(C)]$, and since the cop-throttling number of $G$ is at least the cop-throttling number of $G$ with a fixed strategy for the robber, it follows that $\thc(G)\geq \lc \sqrt{2g}-\frac{1}{2}\rc$. The bound is clearly tight, e.g., for $G= C_n$, $n\geq 3$.
\end{proof}

Note that the following connections between girth, minimum degree,  and cop-number are known.

\begin{thm}{\rm \cite{AF84}, \cite[Theorem 1.3]{CRbook}}  If $G$ is a graph of girth at least $5$, then $c(G)\ge \delta(G)$.
\end{thm}

\begin{cor}  If $G$ is a graph of girth at least $5$, then $\thc(G)\ge \delta(G)$.
\end{cor}


\subsection{Connections with the burning number of a graph}

\emph{Graph burning} is a deterministic process on a simple graph in which one seeks to \emph{burn} every vertex of the graph in a minimum number of steps. In step 1, a vertex is chosen to be burned; in each subsequent step, a new unburned vertex is chosen to be burned, and in addition, every unburned neighbor of a burned vertex becomes burned. The process continues until the entire graph is burned. This produces a burning sequence $(v_1, v_2, \ldots v_m)$, 
where $v_i$ is the vertex chosen to be burned in the $i^{th}$ step. The  \emph{burning number} of a graph $G$ is the minimum number $m$ for which a burning sequence of length $m$ exists, and is denoted by $b(G)$. 

The burning number was introduced in \cite{bonato2014burning} as a graph invariant that could be used to model the spread of a contagion in the graph. It was shown in that paper that $b(G) = \min\{b(T) : T \mbox{ is a spanning tree of } G\}$  and $b(P_n) = \lf \sqrt{n} \rf$. It was conjectured there that for any graph $G$ on $n$ vertices, $b(G) \leq \lf \sqrt{n} \rf$. However, the best known upper bounds are $b(G) \leq \sqrt{\frac{12n}{7}} + 3$ (see \cite{bessy2015bounds}), and $b(G) \leq \lc\frac{\sqrt{24n+33}}{4} - \frac{3}{4}\rc \approx \frac{\sqrt{6}}{2}\sqrt{n}$ (see \cite{land2016upper}).

This invariant $b(G)$ is related to PSD zero forcing (and hence, cop-throttling) in the case where $G$ is a  tree. We can consider a burning process with burning sequence $(v_1, v_2, \ldots v_m)$ as a `staggered' PSD zero forcing process, i.e., one in which we are only permitted to color one vertex from the zero forcing set at a time. PSD color changes  propagate through a tree in the same manner as a burning process:  Every neighbor of a blue vertex in a tree will be forced in the next step in a PSD zero forcing process. This is not the case on graphs with cycles. 
\begin{prop}\label{prop:burn}
Let $T$ be a tree and $b(T)$ its burning number. Then $\thc(T)=\thp(T) \leq 2b(G) - 1$ and this bound is tight.
\end{prop}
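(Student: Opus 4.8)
The plan is to exploit the tight connection between graph burning on a tree and PSD zero forcing on a tree described in the paragraph preceding the proposition. Given a tree $T$ with burning number $b=b(T)$, fix an optimal burning sequence $(v_1,v_2,\dots,v_b)$, so that after $b$ steps every vertex of $T$ is burned. The key structural fact I would use is that in a burning process every vertex $v_i$ chosen at step $i$ burns, by the end of the process, exactly the ball $N^{b-i}[v_i]$ (its closed neighborhood of radius $b-i$), and since burning succeeds these balls cover $V(T)$. I would then take $S=\{v_1,\dots,v_b\}$ as a candidate capture/PSD zero forcing set and estimate $\thc(T;S)=\thp(T;S)=|S|+\capt(T;S)$ using Theorem~\ref{tree-ptc-eq-ptp} and Remark~\ref{kradth} (i.e. $\capt(T;S)=\ptp(T;S)=\max_{u\in V(T)}\dist(u,S)$ for trees).

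First I would bound $\capt(T;S)=\max_{u}\dist(u,S)$. Since the radius-$(b-i)$ balls about the $v_i$ cover $T$, every vertex $u$ lies in some $N^{b-i}[v_i]$, hence $\dist(u,v_i)\le b-i\le b-1$; taking the best $i$ gives $\dist(u,S)\le b-1$, so $\capt(T;S)\le b-1$. Combined with $|S|=b$ this yields $\thc(T)\le \thc(T;S)=\thp(T;S)\le b+(b-1)=2b-1$, which is the claimed bound (note $\thc(T)=\thp(T)$ is already Theorem~\ref{tree-ptc-eq-ptp}).

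For tightness, I would exhibit a family of trees attaining $2b-1$. The path $P_n$ is the natural first candidate: $b(P_n)=\lf\sqrt n\rf$ by \cite{bonato2014burning}, and $\thc(P_n)=\lc\sqrt{2n}-\tfrac12\rc$ by Corollary~\ref{thmThrP}, so I would check that for suitable $n$ (e.g. $n=m^2$ with $m$ chosen so the ceilings line up) one gets $\thc(P_n)=2m-1=2b(P_n)-1$; a direct computation with $n=m^2$ gives $\thc(P_{m^2})=\lc\sqrt{2}\,m-\tfrac12\rc$ and $2b-1=2m-1$, which agree exactly when $m=1$ and need a slightly larger or adjusted example otherwise, so I would instead look for a small explicit tree (such as a spider/star of paths, or simply $P_4$ where $b(P_4)=2$ and $\thc(P_4)=\lc\sqrt 8-\tfrac12\rc=3=2b-1$) to certify tightness cleanly.

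The main obstacle I anticipate is the tightness half rather than the inequality: verifying that the ceiling functions in $\thc(P_n)$ and in $b(P_n)=\lf\sqrt n\rf$ can be made to meet the bound $2b-1$ requires choosing $n$ carefully, and it is possible that no path achieves equality for large $n$, forcing the use of a hand-built small tree instead; I would settle this by testing $P_4$ (and similar tiny trees) explicitly. The forward inequality itself is routine once the burning-to-covering fact is stated precisely; the only subtlety there is making sure the identification of burned regions with radius balls is correct in a tree, which follows because in a tree every neighbor of a burned vertex burns at the next step, so a vertex ignited at step $i$ has spread distance exactly $b-i$ by the end.
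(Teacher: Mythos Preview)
Your argument is correct and follows essentially the same route as the paper: take an optimal burning sequence $(v_1,\dots,v_m)$, set $S=\{v_1,\dots,v_m\}$, and observe that $\ptp(T;S)\le m-1$ (you justify this more explicitly via the ball covering $V(T)=\bigcup_i N^{m-i}[v_i]$, while the paper simply asserts it). For tightness, the paper exhibits a specific five-vertex tree with $b=2$ and $\thp=3$ rather than a path, but your choice of $P_4$ is equally valid since $b(P_4)=2$ and $\thc(P_4)=3$.
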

\bpf
Let $b(T) = m$ and $(v_1, v_2, \ldots, v_m)$ a burning sequence for $T$. Consider a PSD zero forcing process with $S = \{v_1, \ldots, v_m\}$ as the set of vertices initially colored blue. Then $\ptp(T; S) \leq m-1$; hence, $\thp(T) \leq \thp(T; S) \leq m + (m-1) = 2m-1$. 

To see that the bound is tight, consider the tree $T$ shown in Figure \ref{fig:burnex}.  The burning sequence (2,4) and $|V(T)|>1$ imply $b(T)=2$, whereas $\gamma(T)=2$ and  $|V(T)|>2$ imply $\thp(T)=3$. \vspace{-10pt}
\begin{figure}[h!] \begin{center}
\scalebox{.5}{\includegraphics{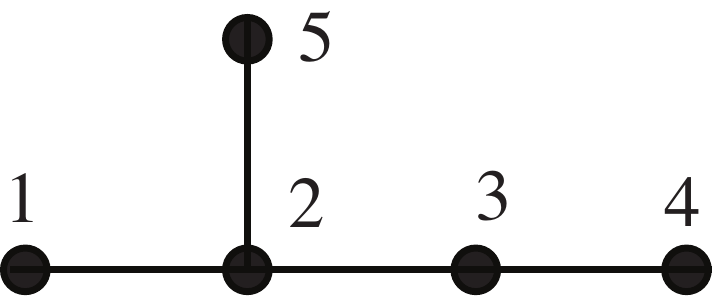}}
\caption{\label{fig:burnex} A tree $T$ for which $\thc(T)=\thp(T)=3=2b(G)-1$ \vspace{-15pt}}
\end{center}
\end{figure}
\epf

 We remark that the above bound is usually rather poor, as delaying the start of forcing in a PSD zero forcing set is inefficient.  As an example, consider the path: $b(P_n) = \lf \sqrt{n} \rf$, so $2b(G) - 1=2\lf \sqrt{n} \rf-1$, whereas $\thc(P_n) = \thp(P_n) = \lc \sqrt{2n} - \frac{1}{2} \rc.$  Nevertheless, the relationship in Proposition \ref{prop:burn} is interesting, and together with  $b(G) \lesssim \frac {\sqrt 6}2 \sqrt n$  readily produces the upper bound $\thc(T) \lesssim \sqrt 6\sqrt{n}  = O(\sqrt{n})$ for trees.


\section{Low throttling number}\label{sextremeL} 

In this section, we characterize the graphs with $\thc(G)=h$ for fixed low values of $h$. We do so by partitioning $h$ as $(k,p)$, where $k$ is interpreted as a number of cops, $p$ is interpreted as a number of rounds, and $h=k+p$.  
\begin{obs} \label{thc=1} For a graph $G$, 
$\thc(G)=1$ if and only if $G=K_1$.
\end{obs}\vspace{-8pt}

 \begin{prop}
\label{thc=2} For a graph $G$, 
$\thc(G)=2$ if and only if $|V(G)|\geq 2$ and $G$ satisfies  one of the following conditions: \vspace{-3pt}
\begin{enumerate}[(1)]
\item $\gamma(G)=1$.\vspace{-3pt}
\item $G=2K_1$.\vspace{-3pt}
\end{enumerate}
\end{prop}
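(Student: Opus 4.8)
The plan is to prove the characterization of graphs with $\thc(G)=2$ by analyzing the possible decompositions $2 = k+p$ with $k$ a number of cops and $p$ a number of rounds, namely $(k,p)\in\{(1,1),(2,0)\}$ (the case $(0,2)$ is impossible since $k\geq c(G)\geq 1$ for any nonempty graph, and $\capt_0$ is infinite).

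\begin{proof}
First suppose $\thc(G)=2$. Then $|V(G)|\geq 2$, since $\thc(K_1)=1$ by Observation~\ref{thc=1} and $G$ is nonempty. Since $\thc(G)=2$, there is a capture set $S$ with $|S|+\capt(G;S)=2$, so either $|S|=1$ and $\capt(G;S)=1$, or $|S|=2$ and $\capt(G;S)=0$. In the first case, placing a single cop on the vertex $v\in S$ captures the robber in one round, which means that from its initial placement the robber is already within distance one of $v$ no matter where it chooses to go; equivalently $v$ is a dominating vertex, so $\gamma(G)=1$. In the second case, two cops placed on the two vertices of $S$ capture the robber in zero rounds, which forces the robber to have no legal placement outside $S$; hence $V(G)=S$ and $|V(G)|=2$. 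If the two vertices of $G$ are adjacent then $G=K_2$, which has a dominating vertex, so $\gamma(G)=1$ and we are in case (1); if they are non-adjacent then $G=2K_1$, which is case (2). This establishes that $G$ satisfies (1) or (2).

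Conversely, suppose $|V(G)|\geq 2$ and $G$ satisfies (1) or (2). If $\gamma(G)=1$, pick a dominating vertex $v$ and place one cop there; every other vertex is adjacent to $v$, so wherever the robber is placed, the cop captures it on the first move, giving $\capt_1(G)\leq 1$, hence $\thc(G)\leq 1+1=2$. If $G=2K_1$, then $\gamma(G)=2=|V(G)|$; placing two cops on the two vertices captures the robber immediately, so $\capt_2(G)=0$ and $\thc(G)\leq 2$. In either case $\thc(G)\leq 2$, and since $|V(G)|\geq 2$ we have $G\neq K_1$, so $\thc(G)\geq 2$ by Observation~\ref{thc=1}. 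Therefore $\thc(G)=2$.
\end{proof}

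The only mild subtlety, and the step I would be most careful about, is the case analysis in the "only if" direction when $|S|=2$ and $\capt(G;S)=0$: one must argue that capture in zero rounds forces $V(G)\subseteq S$ (so $|V(G)|=2$) and then split into the adjacent versus non-adjacent subcases, noting that $K_2$ falls under condition (1) rather than being a separate case. Everything else is a direct unwinding of the definitions of capture set, capture time, and domination number.
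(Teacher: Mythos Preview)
Your proof is correct and follows essentially the same approach as the paper: a case split on $(k,p)\in\{(2,0),(1,1)\}$, identifying the $(1,1)$ case with the existence of a dominating vertex and the $(2,0)$ case with $|V(G)|=2$. Your write-up is more detailed than the paper's (which dispatches each case in a single sentence), and you make explicit the minor point that $K_2$ is absorbed into condition~(1) via $\gamma(K_2)=1$; otherwise the arguments are the same.
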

\bpf
If $\thc(G)=2$, there are two possibilities for $(k,p)$, namely $(k,p)=(2,0)$ or $(k,p)=(1,1)$. It is easy to see that $k=2$ and $\capt_2(G)=0$ is possible if and only if $G=K_2$ or $G=2K_1$.
Likewise, $k=1$ and $\capt_1(G)=1$ is possible  if and only if $G$ has a dominating vertex and order at least two.
\epf\vspace{-8pt}

\begin{prop}\label{thc=3} For a graph $G$,  $\thc(G)=3$ if and only if $|V(G)|\ge 3$ and $G$ {satisfies 
one} of the following conditions:
\begin{enumerate}[(1)]
\item \label{12} $\gamma(G)>2$ and there exists $z\in V(G)$ such that
 \ben
 \item \label{12a} for all $v\in V(G)$, $\dist(z,v)\le 2$, and 
 \item \label{12b} for all $w\in G-N[z]$, there is a vertex $u\in N[z]$ such that $N[w]\subset N[u]$.
 \een
\item\label{21} $\gamma(G)=2$.
\item \label{30} $G=3K_1$.
\end{enumerate}
\end{prop}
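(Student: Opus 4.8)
The plan is to mirror the structure of the proofs of Propositions~\ref{thc=2} and \ref{thc=3}'s predecessors: since $\thc(G)=3$ means there is a partition $3=k+p$ with $k$ cops catching the robber in $p$ rounds, and since smaller partitions are excluded by Observation~\ref{thc=1} and Proposition~\ref{thc=2}, the relevant pairs $(k,p)$ are $(3,0)$, $(2,1)$, and $(1,2)$. First I would dispatch the two easy cases. The case $(3,0)$ with $\capt_3(G)=0$ forces every vertex to be covered by a cop at the start, so $G$ has at most $3$ vertices; combined with $|V(G)|\ge 3$ and the exclusion of $\thc(G)\le 2$, this yields exactly $G=3K_1$ (note $G=K_3$ and $G=K_2\cup K_1$ both have a dominating vertex in a component... actually $K_2\cup K_1$ has $\gamma=2$, so it falls under case~\eqref{21}; I will need to be careful that $(3,0)$ only contributes genuinely new graphs, namely $3K_1$). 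The case $(2,1)$ with $\capt_2(G)=1$ is possible exactly when two vertices dominate $G$, i.e. $\gamma(G)\le 2$; intersecting with "$\thc(G)\ne 2$" leaves $\gamma(G)=2$, giving condition~\eqref{21}.

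The substantive case is $(1,2)$: one cop suffices to catch the robber in at most $2$ rounds. I would first argue that such a graph must have $\gamma(G)>2$ to avoid being already covered by case~\eqref{21} (and $>1$ to avoid Proposition~\ref{thc=2}), which explains the hypothesis $\gamma(G)>2$ in condition~\eqref{12}. Then I would characterize exactly when a single cop, placed optimally at some vertex $z$, captures the robber in two rounds. The necessity of \eqref{12a}: if some vertex $v$ has $\dist(z,v)\ge 3$, the robber can start there and the cop cannot even reach the robber's closed neighborhood in two moves, so capture in $2$ rounds is impossible regardless of play — hence every vertex is within distance $2$ of $z$. For \eqref{12b}, I would analyze the robber's options after the cop's first move: the cop will move to some vertex adjacent to (or equal to) wherever pressure is needed; the robber sitting at a vertex $w\notin N[z]$ (distance exactly $2$) survives round one but must be caught in round two, which forces that the cop can move to a vertex $u\in N[z]$ with $N[w]\subseteq N[u]$ so the robber has nowhere to flee — this is precisely the corner-type condition, and it must hold simultaneously for all such $w$ because the robber chooses $w$ adversarially.

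For the converse (sufficiency) in case~\eqref{12}, I would place the cop on the witness vertex $z$ and give an explicit two-round strategy: after the robber picks a start vertex, if the robber is in $N[z]$ the cop captures immediately in round one; if the robber is at some $w\in G-N[z]$, the cop moves to the associated $u\in N[z]$ with $N[w]\subseteq N[u]$, and then on the robber's turn the robber either stays at $w$ (captured, since $w\in N[u]$) or moves to a neighbor $w'$ of $w$; but $w'\in N[w]\subseteq N[u]$, so the cop captures $w'$ in round two. One subtlety: I must make sure $u$ can actually be reached by the cop from $z$ in one move, which is exactly why \eqref{12b} demands $u\in N[z]$. I would also verify that this $G$ genuinely has $\thc(G)=3$ and not less, which follows from the imposed $\gamma(G)>2$ together with Proposition~\ref{thc=2} and Observation~\ref{thc=1}. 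I expect the main obstacle to be the careful bookkeeping in case $(1,2)$ — precisely pinning down the robber's and cop's optimal moves over two rounds to extract the clean combinatorial condition~\eqref{12b}, and making sure the three enumerated cases are exhaustive and mutually disjoint in the way the statement intends (in particular that a graph satisfying \eqref{21} is correctly excluded from \eqref{12} by the clause $\gamma(G)>2$).
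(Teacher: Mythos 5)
Your proposal is correct and follows essentially the same route as the paper: the same decomposition into the pairs $(k,p)\in\{(3,0),(2,1),(1,2)\}$, the same identification of $3K_1$ and $\gamma(G)=2$ from the first two cases, and the same analysis of the one-cop, two-round case yielding conditions \eqref{12a} and \eqref{12b} (your explicit two-round cop strategy for sufficiency is slightly more detailed than the paper's, which simply asserts $\capt(G,\{z\})\le 2$).
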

\bpf Assume $\thc(G)=3$, which implies $|V(G)|\ge 3$. There are three possible ways $\thc(G)=3$ can be achieved, namely $(k,p)=(3,0), (2,1),$ or $(1,2)$.  
The hypotheses $(k,p)=(3,0)$ and $|V(G)|\ge 3$ imply $|V(G)|=3$, which together with $\thc(G)=3$ implies   $G=3K_1$ or $G=K_2\du K_1$; observe that $\gamma(K_2\du K_1)=2$.
The hypotheses $(k,p)=(2,1)$ and $\thc(G)=3$ imply $\gamma(G)=2$. 

So we assume $\gamma(G)> 2$ and $G\ne 3K_1$, implying there is  a vertex $z\in V(G)$ such that $\capt(G,\{z\})=2$. This implies every vertex is  distance at most 2 from $z$.  The robber must avoid the closed neighborhood of $z$ to avoid capture in one round, so assume the robber chooses $w \in G -N[z]$.  There must be a way for the cop to catch the robber in two rounds.  That implies there is a vertex $u$ adjacent to $z$  such that $N[w]\subset N[u]$.  

For the converse, it is clear that either  of $G=3K_1$ or ($\gamma(G)=2$ and $|V(G)|\ge 3$) implies  $\thc(G)=3$. So assume $|V(G)|\ge 3$,  $\gamma(G)>2$, and there exists $z\in V(G)$ such that
 for all $v\in V(G)$, $\dist(z,v)\le 2$, and 
 for all $w\in G-N[z]$, there is a vertex $u\in N[z]$ such that $N[w]\subset N[u]$. It is clear $|V(G)|\ge 3$, $G\ne 3K_1$, and $\gamma(G)>2$.  The conditions $|V(G)|\ge 3$ and  $\gamma(G)>2$ imply $\thc(G)\ge 3$. The existence of $z\in V(G)$ satisfying conditions \eqref{12a} and \eqref{12b} 
 implies that $\capt(G,\{z\})\le 2$, so $\thc(G)\le 3$. \epf
 
 The stellated wheel discussed in Example \ref{ex:SW} illustrates an instance of  condition \eqref{12}. 
 
 Hahn and MacGillivray \cite{HahnMacG} provide a polynomial-time algorithm for determining  whether $\text{capt}_k(G)\leq p$ for a given graph $G$,  given that the capture time $p$ and the number of cops $k$ are fixed. We will call this ``Algorithm HM." In Algorithm \ref{alg2} below, we give a polynomial-time algorithm for determining whether 2 cops can catch a robber in 2 steps. A proof of the correctness of Algorithm \ref{alg2} is included in the proof of Theorem \ref{thc=4}; see \cite{alg2code} for an implementation of Algorithm \ref{alg2} in M{\small ATLAB}.\vspace{5pt}

\begin{algorithm2e}[H]
\caption{Can 2 cops catch the robber in 2 steps?\label{alg2}}

\textbf{given} $G=(V,E)$\;
\For{$\{c_1,c_2\}\subset V$}{
sum$\gets$0\;
\For{$r\notin N[\{c_1,c_2\}]$}{
flag$\gets$0\;
\For{$c_1'\in N[c_1]$}{
\For{$c_2'\in N[c_2]$}{

\If{$N[r]\subseteq N[\{c_1',c_2'\}]$}{
flag$\gets$1\;
}
}
}
sum$\gets$sum+flag;
}
\If{$\emph{sum}=|V|-|N[\{c_1,c_2\}]|$}{
\Return {\bf true}  (i.e., 2 cops at  $\{c_1,c_2\}$ always catch the robber in 2 steps)
}
}
\Return {\bf false}  (i.e., the robber can always evade 2 cops for more than 2 steps)
\end{algorithm2e}

\begin{thm}\label{thc=4}  For a graph $G$, 
$\thc(G)=4$ if and only if $\thc(G)\notin \{1,2,3\}$ according to    Propositions \ref{thc=2}, \ref{thc=3}, and Observation \ref{thc=1}, and $G$ satisfies at least one of the following conditions:
\begin{enumerate}[(1)]
\item \label{13} $G$ is a graph such that Algorithm HM returns true for $\emph{capt}_1(G)\leq 3$.\vspace{-4pt}
\item \label{22} $G$ is a graph such that Algorithm \ref{alg2} returns true.\vspace{-4pt}
\item \label{31} $\gamma(G)=3$.\vspace{-4pt}
\item \label{40} $G=4K_1$.\vspace{-4pt}
\end{enumerate}
\end{thm}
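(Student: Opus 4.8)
The plan is to split the statement $\thc(G)=4$ into the two halves $\thc(G)\ge 4$ and $\thc(G)\le 4$ and treat them separately. By Observation~\ref{thc=1} and Propositions~\ref{thc=2} and~\ref{thc=3}, the condition $\thc(G)\ge 4$ is exactly the hypothesis ``$\thc(G)\notin\{1,2,3\}$,'' and I would first record the two consequences of it that get used repeatedly: that $\thc(G)\ge 4$ forces $|V(G)|\ge 4$ (since $\thc(G)\le|V(G)|$ always) and $\gamma(G)\ge 3$ (since $\gamma(G)\le 2$ yields $\thc(G)\le 3$ by Propositions~\ref{thc=2} and~\ref{thc=3}). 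For the other half, observe that $\thc(G)\le 4$ holds iff one of the partitions $(k,p)=(1,3),(2,2),(3,1),(4,0)$ is feasible, i.e.\ iff $\capt_1(G)\le 3$, or $\capt_2(G)\le 2$, or $\capt_3(G)\le 1$, or $\capt_4(G)=0$. I would then match these four alternatives to conditions (1)--(4), under the standing assumption $\thc(G)\ge 4$: the first is condition (1) by correctness of Algorithm HM~\cite{HahnMacG}; the second is condition (2) by correctness of Algorithm~\ref{alg2} (proved below); the third is equivalent to $\gamma(G)\le 3$, which together with $\gamma(G)\ge 3$ is condition (3); and the fourth is equivalent to $|V(G)|\le 4$, which together with $|V(G)|\ge 4$ and $\gamma(G)\ge 3$ forces either $G=4K_1$ (condition (4)) or $\gamma(G)=3$ (condition (3)). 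Running this equivalence both ways gives the theorem: conversely, each of (1)--(4) exhibits a feasible pair summing to $4$, hence $\thc(G)\le 4$, and with $\thc(G)\ge 4$ this yields $\thc(G)=4$.

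The substantive part of the argument is verifying that Algorithm~\ref{alg2} returns \textbf{true} if and only if two cops can guarantee capture within two rounds. For this I would prove the game-theoretic characterization that a placement $\{c_1,c_2\}$ captures the robber in at most two rounds precisely when, for every robber start $r\notin N[\{c_1,c_2\}]$, there exist first-round cop destinations $c_1'\in N[c_1]$ and $c_2'\in N[c_2]$ with $N[r]\subseteq N[\{c_1',c_2'\}]$. The ``only if'' direction is the cops' strategy: a robber starting in $N[\{c_1,c_2\}]$ is caught in round~1 when a cop steps onto its vertex, and for $r$ outside that set the cops move to such $c_1',c_2'$, after which every round-1 robber move lands in $N[r]\subseteq N[\{c_1',c_2'\}]$ and a cop steps onto the robber in round~2. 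The ``if'' direction shows the robber escapes otherwise: if for some $r\notin N[\{c_1,c_2\}]$ no such $(c_1',c_2')$ exists, then against any round-1 cop moves $(c_1',c_2')$ the robber can reach a vertex of $N[r]$ outside $N[\{c_1',c_2'\}]$ and survives round~2, so capture fails. Once this is in hand, inspecting the nested loops of Algorithm~\ref{alg2} shows that ``flag'' $=1$ is exactly the existence of a suitable $(c_1',c_2')$ for the given $r$, that the test ``sum $=|V|-|N[\{c_1,c_2\}]|$'' verifies this for every relevant $r$, and hence that the algorithm decides $\capt_2(G)\le 2$; routine counting confirms it runs in polynomial time.

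The step I expect to be the main obstacle is twofold: getting the quantifier bookkeeping above exactly right — in particular being careful about the within-round order of play (cops move, then the robber, with a capture recorded as soon as a cop shares the robber's vertex) and about the two cops' first moves being chosen independently from $N[c_1]$ and $N[c_2]$ — and confirming that it suffices for Algorithm~\ref{alg2} to range over pairs of \emph{distinct} initial cop vertices, equivalently that starting both cops on a common vertex never enables a two-round capture otherwise impossible. For the latter I would first dispatch the degenerate case (if both cops start at a vertex $c$ of degree $0$ they are confined to $\{c\}$ and cannot win on any graph of order greater than one, which is excluded when $\thc(G)=4$; and $N[c]=V$ is excluded since it would give $\gamma(G)=1$), and then argue that from a common start one may instead spread the cops to $c$ and a suitable neighbor or to the relevant round-1 destinations without losing the ability to realize the needed capture. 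The remaining verifications — the identities $\capt_3(G)\le 1\iff\gamma(G)\le 3$ and $\capt_4(G)=0\iff|V(G)|\le 4$, together with the fact that $\thc(G)\ge 4$ forces $|V(G)|\ge 4$ and $\gamma(G)\ge 3$ — are straightforward consequences of the definitions and of Observation~\ref{thc-dom}, Observation~\ref{thc=1}, and Propositions~\ref{thc=2} and~\ref{thc=3}.
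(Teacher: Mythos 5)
Your proposal is correct and follows essentially the same route as the paper: a case analysis over the partitions $(k,p)\in\{(4,0),(3,1),(2,2),(1,3)\}$, with conditions (1) and (3)--(4) handled by citation and elementary counting, and the real content being the correctness of Algorithm \ref{alg2}, which you establish via the same characterization the paper uses (capture in two rounds from $\{c_1,c_2\}$ iff for every $r\notin N[\{c_1,c_2\}]$ there exist $c_1'\in N[c_1]$, $c_2'\in N[c_2]$ with $N[r]\subseteq N[\{c_1',c_2'\}]$). Your handling of the $(4,0)$ case ($|V(G)|=4$ together with $\gamma(G)\ge 3$ forces $G=4K_1$ or $\gamma(G)=3$) is in fact a cleaner derivation than the paper's ``it can be verified for all graphs on 4 vertices.'' The one point to be careful about is your claim that it suffices to iterate over pairs of \emph{distinct} initial cop vertices, i.e., that two co-located cops can always be ``spread out'' without losing a two-round capture: you assert this but do not prove it, and it is not obvious, since starting from $(c,c)$ the cops' round-one destinations range over $N[c]\times N[c]$, which is not contained in $N[c]\times N[d]$ for any single neighbor $d$. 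The paper sidesteps this entirely by quantifying over multisets $\{c_1,c_2\}$ in its correctness argument; if you read the loop the same way, your proof closes with no extra work, and I would recommend doing that rather than trying to justify the separation claim.
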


\bpf It is easy to see that if $\thc(G)\geq 4$ according to Propositions \ref{thc=2}, \ref{thc=3}, and Observation \ref{thc=1}, and if $G$ satisfies any of the four conditions in the statement of the theorem, then $\thc(G)= 4$. 

    Now assume that $\thc(G)=4$. There are four possible ways this can be achieved, namely $(k,p)\in\{(4,0), (3,1), (2,2), (1,3)\}$. The case $(k,p)=(4,0)$ implies $V(G)=4$; it can be verified that all graphs on 4 vertices  that do not have cop-throttling number less than 4 (according to Propositions \ref{thc=2}, \ref{thc=3}, and Observation \ref{thc=1}) satisfy at least one of the conditions in the statement of the theorem. The case $(k,p)=(3,1)$ implies $\gamma(G)=3$. The case $(k,p)=(1,3)$ implies 1 cop can catch the robber in 3 steps, so $G$ must be a graph such that Algorithm HM returns `true' for $\text{capt}_1(G)\leq 3$ (but returns `false' for $\text{capt}_1(G)\leq 2$, i.e., does not have cop-throttling number less than 4).

Finally, the case $(k,p)=(2,2)$ implies 2 cops can catch the robber in 2  steps. 
 We  show that Algorithm \ref{alg2} correctly answers the question ``can 2 cops catch the robber in 2 steps?''. The answer to this question is `yes' if and only if there  is a multiset $\{c_1,c_2\}$ of cop starting positions such that for any starting position $r$ of the robber, there are cop positions $c_1'\in N[c_1]$ and $c_2'\in N[c_2]$ such that $N[r]\subseteq N[\{c_1',c_2'\}]$. In other words, for every move the robber can make, capture can occur in the following step. Note that the robber cannot start in $N[\{c_1,c_2\}]$, since capture will occur in one step. In Algorithm \ref{alg2}, the counter `sum' increments each time there exist such  $c'_1$ and $c'_2$ for a feasible starting position of the robber $r$. If after iterating through all feasible starting positions of the robber, $\text{sum}=|V|-|N[\{c_1,c_2\}]|$, it follows that the robber will always be caught by cops starting at positions $\{c_1,c_2\}$, and  Algorithm \ref{alg2} returns  `true.' Otherwise, if after iterating through all initial cop positions $\{c_1,c_2\}$, some robber position $r$ always allows the robber to evade the cops for 2 steps, then Algorithm \ref{alg2} returns  `false.' Thus, Algorithm \ref{alg2} correctly determines whether 2 cops can  catch the robber in 2 steps.
\epf
\vspace{-10pt}

\section{High throttling number}\label{high}

In this section, we discuss the question of how large $\thc(G)$ can be for a given graph $G$. We have shown in Section \ref{sPSDZF} that  $\thc(P_n) = \Theta(\sqrt{n})$, $\thc(C_n) = \Theta(\sqrt{n})$, and $\thc(G) = O(\sqrt{n})$ for any tree or unicyclic graph $G$ on $n$ vertices. 

{\begin{quest}\label{Q:thc-sqrtn}
Asymptotically, what is the largest possible $\thc(G)$ for graphs $G$ of order $n$? In particular, is $\thc(G) = O(\sqrt{n})$? 
\end{quest}
}

We investigate some other classes of graphs for which capture time or cop number is known to be quite large, and show that for all these classes, Question \ref{Q:thc-sqrtn} is answered in the affirmative. Recall that the famous  Meyniel's Conjecture  states that the upper bound on the cop number of any  connected graph {of order $n$} is $O(\sqrt{n})$, and that this is best possible in that there exist infinite families of  connected  graphs $\{G_n\}$ such that for sufficiently large $n$, there is a constant $d$ for which $c(G_n) \geq d\sqrt{n}$ \cite[Chapter 3]{CRbook}. Such a family is called a \emph{Meyniel extremal family}.  We again remark that a proof of a   %
positive answer to Question \ref{Q:thc-sqrtn} would prove Meyniel's Conjecture, because $c(G)  \le \thc(G)$.


\subsection{Throttling number of a Meyniel extremal family}
One example of a Meyniel extremal family is the family of incidence graphs of finite projective planes of order $q$, denoted $\mathcal{P}_q$, where $q$ is a prime power.  A {\em finite projective plane} $\mathcal{P}_q$ has $q^2 + q + 1$ points. For every two distinct points, there is a unique line passing through them, and for every two distinct lines, there is a unique point that lies on both lines. Finally, there are four points such that no line is incident with more than two of them. The {\em incidence graph $\incpp q$} of a finite projective plane $\mathcal{P}_q$ is a bipartite graph on $2(q^2+q+1)$ vertices. In one part, the vertices represent points of $\mathcal{P}_q$ and in the other, the vertices represent the lines of $\mathcal{P}_q$. A vertex in the first part is adjacent to a vertex in the other if the corresponding point lies on the corresponding line.

It is known that the cop number of $\incpp q$ is $q+1$ (see \cite{praalat2010does}, \cite[Thm 1.5]{CRbook}), and so if Meyniel's Conjecture is true, then the bound $c(G) = O(\sqrt{n})$ is asymptotically tight. 
Although the number of cops required to catch the robber is large for $\incpp q$, the capture time for $q+1$ cops on $\incpp q$ is very low: A winning cop strategy is given in \cite[page 294]{praalat2010does} for $q+1$ cops that takes only 3 rounds, so $\thc(\incpp q)\le \sqrt{|V(\incpp q)|}+3$.

Note that the only orders for which finite projective planes (and hence the incidence graphs $\incpp q$) are known to exist are for prime powers, and that a Meyniel extremal family should consist of an infinite family of graphs of every order $n$. A description is given in \cite{praalat2010does} (see also \cite[Thm 3.8]{CRbook}) of how to construct a graph of any order $n$ from these incidence graphs by adding a path. That is, choose $q$ such that $2(q^2+q+1) \leq n$, and form the graph $G_n$ by adding a path of length $n-2(q^2+q+1)$ to $\incpp q$, producing a graph of order $n$ that is easily seen to  have $c(G_n) = q+1$. 
 By Bertrand's postulate \cite[Theorem 3.7]{CRbook} (also known as Chebyshev's theorem), %
 for any integer $x\ge 2$ there is a prime in the interval $(x, 2x)$. Thus, a prime $q$ can be chosen in the interval $\left(\lf\sqrt{\frac n 8}-1\rf,2\lf\sqrt{\frac n 8}-1\rf\right)$, which implies $\lf\sqrt{\frac n 8}\rf\le q+1=c(G_n)\le \lf\sqrt{\frac n 2}\rf$. Therefore,  $c(G_n) = \Theta(\sqrt{n})$.

\begin{prop}
Let $\{G_n\}$ denote the Meyniel extremal family described above. Then $\thc(G_n) = \Theta(\sqrt{n})$. 
\end{prop}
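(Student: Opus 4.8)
The plan is to establish matching upper and lower bounds of order $\sqrt n$ for $\thc(G_n)$, where $G_n$ is obtained from an incidence graph $\incpp q$ by attaching a path of length $n-2(q^2+q+1)$, with $q=\Theta(\sqrt n)$ as in the construction described above. The upper bound is essentially already in hand: as noted, $q+1$ cops catch the robber on $\incpp q$ in $3$ rounds, and $c(G_n)=q+1$. To get an upper bound on $\thc(G_n)$ I would first bound $\thc(\incpp q)$ by $q+1+3 = q+4$ (placing $q+1$ cops optimally), giving $\thc(\incpp q)\le \sqrt{|V(\incpp q)|}+3$ after noting $q+1\le\sqrt{2(q^2+q+1)}$. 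Then, since $G_n$ is the vertex sum (a clique sum with $m=1$) of $\incpp q$ with a path $P_\ell$ where $\ell=n-2(q^2+q+1)+1$, Corollary~\ref{cor:cliquesum} gives $\thc(G_n)\le \thc(\incpp q)+\thc(P_\ell)\le (q+4)+\lc\sqrt{2\ell}-\tfrac12\rc$. Using the bound $q+1\le\lf\sqrt{n/2}\rf$ and $\ell\le n$, both terms are $O(\sqrt n)$, so $\thc(G_n)=O(\sqrt n)$.

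For the lower bound I would use $c(G)\le\thc(G)$ together with the fact, established in the paragraph preceding the proposition, that $c(G_n)=q+1\ge\lf\sqrt{n/8}\rf=\Omega(\sqrt n)$. This immediately gives $\thc(G_n)=\Omega(\sqrt n)$, hence combined with the upper bound $\thc(G_n)=\Theta(\sqrt n)$. So the whole argument is short: it only assembles the cop-number estimate for $\{G_n\}$ already derived, the known $3$-round cop strategy on $\incpp q$ from \cite{praalat2010does}, the throttling formula for paths (Corollary~\ref{thmThrP}), and the clique-sum corollary.

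The only genuinely non-routine step is verifying that the clique-sum decomposition applies cleanly: one must check that $G_n$ really is $\incpp q \cup P_\ell$ with intersection a single vertex (i.e. $K_1$), which holds because the added path shares exactly its attachment endpoint with $\incpp q$ and both pieces have order greater than $1$; and one must confirm that the path piece has order $\ell = n - 2(q^2+q+1)+1 \ge 1$, so $\thc(P_\ell)$ is well-defined (if $\ell\le 1$ then $G_n=\incpp q$ and the bound is even easier). Everything else is substituting the interval estimate $\lf\sqrt{n/8}\rf\le q+1\le\lf\sqrt{n/2}\rf$ for $q$ into the two $O(\sqrt n)$ terms. I expect no real obstacle here — the proposition is a corollary-style wrap-up of the preceding discussion, and the main content (the $3$-round strategy and the $\Theta(\sqrt n)$ cop number) has already been cited or derived.
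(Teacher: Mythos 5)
Your proposal is correct and follows essentially the same route as the paper's proof: the upper bound via Corollary \ref{cor:cliquesum} applied to the vertex sum of $\incpp q$ (with $\thc(\incpp q)\le \sqrt{2(q^2+q+1)}+3$ from the $3$-round strategy) and $P_\ell$ (with $\thc(P_\ell)=\lc\sqrt{2\ell}-\tfrac12\rc$), and the lower bound via $\thc(G_n)\ge c(G_n)=q+1=\Omega(\sqrt n)$. Your extra care about the degenerate case $\ell\le 1$ and the verification that the intersection is a single vertex are fine but not points the paper dwells on.
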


\begin{proof}
Fix $n$ and choose $q$ as described above to form the graph $G_n$ as a vertex sum of $\incpp q$ and a path of order $k = n-2(q^2 + q+1)+1$. 
By Corollary \ref{cor:cliquesum},
\begin{eqnarray*}
\thc(G_n) & \leq & \thc(P_k) + \thc(\incpp q) \\
& \leq & \lc \sqrt{2k} - \tfrac{1}{2} \rc + \sqrt{n-k} + 3 \\
& = & O(\sqrt{n}).
\end{eqnarray*}
Furthermore, since $c(G_n) = q+1 = \Omega(\sqrt{n})$ and $\thc(G_n) \geq c(G_n)$, we have $\thc(G_n) = \Theta(\sqrt{n})$.
\end{proof}


\subsection{Throttling number for a family of cop-win graphs with maximum capture time}

We remark that   a graph  $G$ with large capture time for the minimum number $c(G)$ of cops required to catch the robber   does  not necessarily have high throttling number, since the introduction of more cops can greatly decrease the capture time. 
In \cite{BGHK09}, an upper bound for the capture time of a single cop on a cop-win graph $G$ is given as $\capt(G) \leq n-4$, where $n$ is the order of the graph. An infinite family of graphs attaining this bound is also given. In \cite{gavenvciak2010cop}, a characterization of all cop-win graphs with capture time $n-4$ is given, along with a family of such graphs with the minimum number of edges; this family is described in the next example. 

\begin{figure}[h!]
\begin{center}
		\scalebox{.45}{\includegraphics{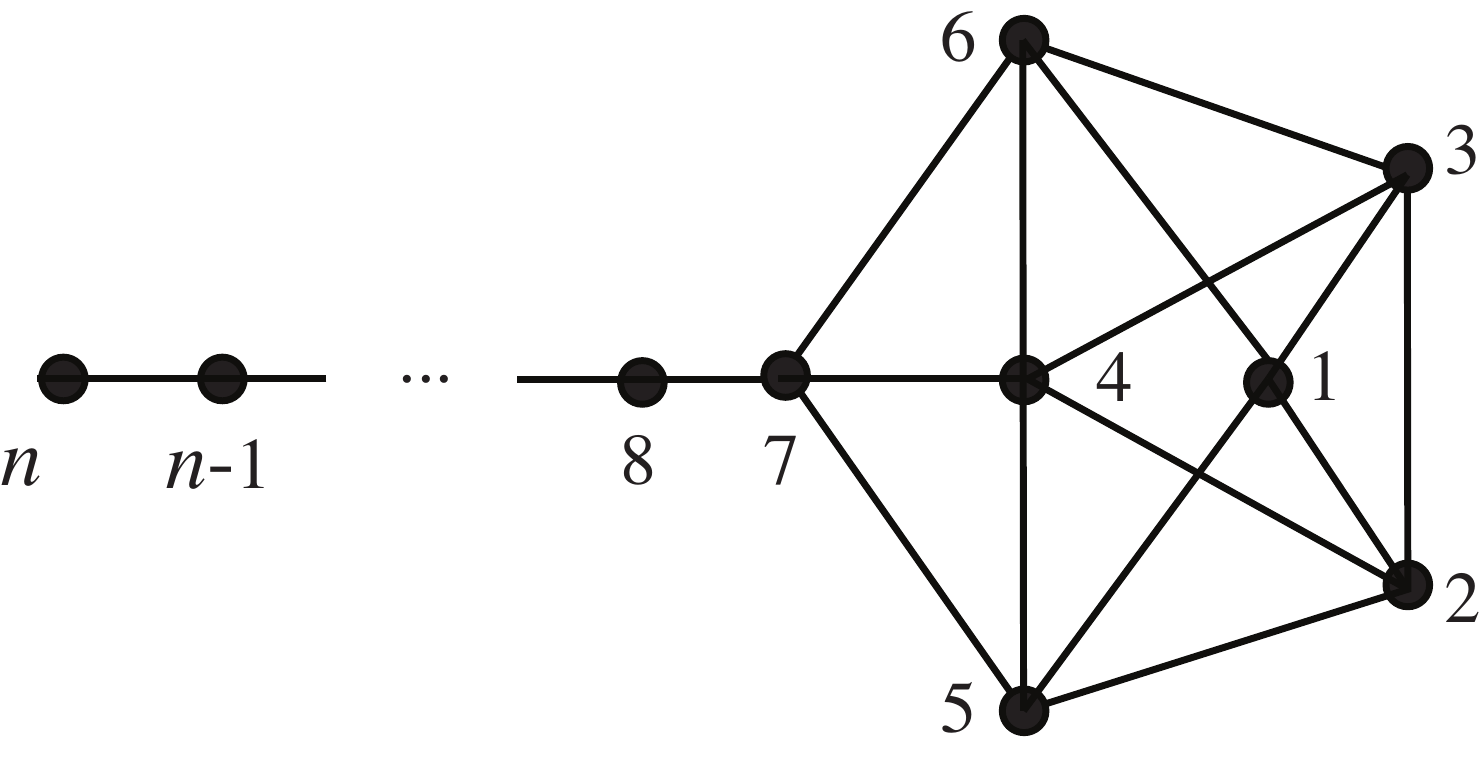}}
\caption{ A family $\{H_n : n \geq 7\}$ of cop-win graphs with maximum capture time \cite{gavenvciak2010cop}} \label{fig:Hn}
\end{center}\end{figure}\vspace{-15pt}

\begin{ex}\label{ex:Hn} The graph  $H_n$ is shown Fig.~\ref{fig:Hn}.  It is shown in \cite{gavenvciak2010cop} that $\capt(H_n)=n-4$.
  The throttling number of $H_7$ is equal to $3$:  The fact  $\{1, 7\}$ is a  dominating set of order 2 implies that $\thc(H_7) \leq 3$.    Since the capture time with 1 cop is $7-4 = 3$, $\thc(H_7)\ne 2$. 
Since $H_n$ is the vertex sum of $P_{n-6}$ and $H_7$, Theorem \ref{cliquesum} implies
\[\lc \sqrt{2n-12} - \tfrac{1}{2}\rc=\thc(P_{n-6})\le \thc(H_n) \le \thc(P_{n-6})+\gamma(H_7)= \lc \sqrt{2n-12} - \tfrac{1}{2}\rc + 2, \]
so $\thc(H_n)=\Theta(\sqrt{n})$. 
\end{ex}

\subsection{Upper bounds on throttling for hypercubes and grids}

The last classes of graphs that we explore in this section are the hypercubes $Q_m$ with $n = 2^m$ vertices and grids $P_a \Box P_b$ with $n=ab$ vertices. It is shown in \cite{BPPR17} that when studying the asymptotics of the $k$-capture time on the hypercube $Q_m$, the number of cops must be increased greatly before it has an effect on the order of the capture time. In particular, it is known that $c(Q_m) = \lf \frac{m+1}{2}\rf$ (see \cite{CRbook}), and that the capture time is $\Theta(m\log m)$ (see \cite{bonato2013capture}). Furthermore, it is shown in \cite{BPPR17} that for all $k \leq 2^{m^\alpha}$ where $\alpha < 1$, $\capt_k(Q_m) = \Theta(m\log m).$ However, 
\[\thc(Q_m) \leq \lf \frac{m+1}{2} \rf + m\log m,\]
which is certainly less than $\sqrt{n} = 2^{m/2}$. That is, $\thc(Q_m) = O(\log n \log\log n)$. 

Similarly, it was shown in \cite{grid_capture_time} that $c(P_a\Box P_b)=2$, and that the capture time is $\lf\frac{a+b}{2}\rf-1$. Thus, $\thc(P_a\Box P_b)\leq \lf\frac{a+b}{2}\rf+1=O(\sqrt{n})$. Note that there are also other ways to achieve cop-throttling of $O(\sqrt{n})$, i.e., there are other pairs $(k,p)$ such that $k$ cops can catch a robber in $p$ steps and $k+p=O(\sqrt{n})$. For example, consider a standard lattice labeling of the vertices of $P_a\Box P_b$, $a\geq b\geq 2$, (where the four vertices of degree 2 have labels $(1,1), (a,1), (1,b), (a,b)$).  If the cops start at the vertices with coordinates $(\lf a/2\rf,2), (\lf a/2\rf,4), \ldots, (\lf a/2\rf,b-1)$ and move along the $a$-axis toward whichever half of the grid  contains the robber, they can always catch the robber in $a/2+O(1)$ moves.  Thus, this strategy also yields  $\thc(P_a\Box P_b)=O(\sqrt{n})$.  


\section{Conclusion}

In this paper, we introduced and explored the concept of throttling for the game of Cops and Robbers. We presented several methods for bounding the cop-throttling number of general graphs using parameters such as the PSD zero forcing number, burning number, girth, and minimum degree. We also investigated cop-throttling in trees and unicyclic graphs, characterized graphs with low cop-throttling numbers, and explored how large the cop-throttling number can be in general. All bounds obtained suggest that $\thc(G)=O(\sqrt{n})$; if this asymptotic bound holds in general, it would imply Meyniel's Conjecture.

One direction for future work related to cop-throttling is to determine whether there exists a family of trees for which the cop-throttling number is asymptotically equal to $2\sqrt{n}$; similarly, it would be interesting to determine if the bound in Theorem \ref{thcU3sqrtn} for unicyclic graphs is tight, or if it can be improved. Further, using the clique sum result given in Theorem  \ref{cliquesum} and the result of Theorem \ref{thcU3sqrtn}, one could perhaps obtain a bound for the cop-throttling number of cactus graphs (graphs in which any two cycles share at most one vertex). It would also be interesting to develop an exact algorithm for cop-throttling of cactus graphs, and more generally, bounds for planar graphs; some results on the cop number and capture time of planar graphs (e.g. \cite{AF84,loh2017,pisa2016}) may be useful toward this end.\vspace{-5pt}


\subsection*{Acknowledgements}  We thank Anthony Bonato for stimulating discussions related to this project.  We thank  the  National Science Foundation  (NSF 1604458, NSF 1604697), the Combinatorics Foundation, and the International Linear Algebra Society for financial support of this project.  


\end{document}